\def\HH{\mathcal{H}}
\def\RR{\mathbb{R}}
\def\tF{\texttt{F}}
\def\invt{^{-T}}%
\def\inv{^{-1}}%
\def\nref#1{(\ref{#1})}
\def\tS{\tilde{S}}
\newcommand{\xpmatrix}[1]{\begin{pmatrix} #1 \end{pmatrix}}
\newcommand{\eq}[1]{\begin{equation}\label{#1}}
\newcommand{\en}{\end{equation}}
\def\eps{\varepsilon} 
\title{Schur Complement based 
domain decomposition preconditioners with Low-rank corrections
\thanks{This work was supported
by NSF under grant NSF/DMS-1216366 and by the Minnesota Supercomputing
Institute}}
\author{Ruipeng Li \and Yuanzhe Xi \and Yousef Saad
\thanks{Address: Department of Computer Science \& Engineering,
        University of Minnesota, Twin Cities.
{\tt \{rli,yxi,saad\} @cs.umn.edu}}
}
\begin{document}

\maketitle

\begin{abstract}
This  paper  introduces a  robust  preconditioner  for general  sparse
symmetric matrices,  that is based  on low-rank approximations  of the
Schur complement in a Domain Decomposition (DD) framework.  In this ``Schur
Low Rank''  (SLR) preconditioning approach, the  coefficient matrix is
first  decoupled   by  DD,  and   then  a  low-rank
correction is exploited to compute an approximate inverse of the Schur
complement associated  with the  interface points.  The  method avoids
explicit  formation  of  the  Schur  complement matrix.  We  show  the
feasibility  of this  strategy  for  a model  problem,  and conduct  a
detailed spectral  analysis for the relationship  between the low-rank
correction and the quality  of the preconditioning.  Numerical
experiments  on general  matrices illustrate the
robustness and efficiency of the proposed  approach.
\end{abstract}

\begin{keywords} 
low-rank approximation, the Lanczos algorithm, domain decomposition, symmetric sparse linear system, parallel preconditioner, Krylov subspace method
\end{keywords}

\section{Introduction}
We consider the problem of solving the linear system
\begin{equation} \label{eq:linsys}
Ax=b,
\end{equation}
with $A \in \RR^{n\times n}$ a large sparse \emph{symmetric} matrix.
Krylov subspace methods preconditioned with a form of Incomplete LU (ILU) 
factorization can be quite effective for this problem 
but there are situations where ILU-type
preconditioners encounter difficulties. 
For instance, when the matrix is highly
ill-conditioned or indefinite, the construction of the factors 
may not complete or may result in unstable factors.
Another situation, one that initially motivated this line of work,
is that ILU-based methods can yield exceedingly poor performance
on certain high performance computers such as those 
equipped with  GPUs \cite{RliSaadGPU} or Intel Xeon Phi processors.
This is because 
building and using  ILU factorizations is a highly sequential process. 
Blocking, which is a highly effective strategy utilized by 
sparse direct solvers to boost performance, is rarely exploited in 
the realm of iterative solution techniques. 

In the late 1990s, a class of methods appeared as an alternative
to ILUs, that did not require forward and backward triangular solves.
These were developed  primarily as a means to bypass the issues just
mentioned and  were based on finding an approximate inverse of the original
matrix, that was also sparse, see, e.g., \cite{Benzi-al,BenT98,grote-huckle}
among others. These methods were, by and large, 
later abandoned as practitioners 
found them too costly  in terms of preprocessing, iteration time,
and memory usage. 

Another line of work that emerged in recent years 
as a means to compute preconditioners, is that of
\emph{rank-structured matrices}.
The starting point is the work  by W. Hackbusch and co-workers
who introduced the notion of $\HH$-matrices in the 1990s 
\cite{Hackbusch99Hmat1,Hackbusch99Hmat2}. These were based on some interesting
rank-structure observed on matrices arising from the use of the fast multipole
methods or the inverses of some partial differential operators.
A similar  rank-structure was also exploited by others in 
the so-called  Hierarchically Semi-Separable (HSS) matrix format which
represents  certain  off-diagonal  blocks by  low-rank  matrices
\cite{DarveHSS-2O13,EngquistYing,LeBorne:2003,LeBorneGras06,WangdeHoopXia11,XiaChanGuLu10,XiaGu10}. 

More  recent  approaches did not exploit this rank structure but  
focused instead   on  a  multilevel  low-rank  correction
technique,  which  include the  recursive  Multilevel Low-Rank  (MLR)
preconditioner  \cite{MLR-1},   Domain  Decomposition  based  Low-Rank
(DD-LR)    preconditioner   \cite{RliSaadMLR2},    and    the   LORASC
preconditioner  \cite{grigori:hal-01017448}.  This  paper  generalizes
the technique developed in \cite{MLR-1} 
to the  classical Schur complement methods and proposes
a    Schur     complement    based    Low-Rank     (SLR)    correction
preconditioner.

This  paper  considers  only  symmetric  matrices,  and  the  proposed
spectral  analysis is  restricted to  the Symmetric  Positive Definite
(SPD)  case.  However,  the   method  can  be  extended  to  symmetric
indefinite matrices as  long as they have an  SPD interface, i.e., the
submatrix associated  with the  interface unknowns resulting  from the
partitioning is  SPD. This assumption  usually holds for  matrices
arising from  discretization of PDEs.
Extensions  to  the  symmetric  indefinite  matrices  with  indefinite
interface  matrices, as  well  as to  nonsymmetric  matrices are  also
possible but these will be explored in our future work.

It is  useful to compare the  advantages and the  disadvantages of the
proposed  approach with  those  of the  traditional  ILU-type and  the
approximate    inverse-type   preconditioners.     First,    the   SLR
preconditioner  is directly  applicable  to the  class of  distributed
linear systems  that arise in  any standard Domain  Decomposition (DD)
approach,  including all  vertex-based,  edge-based, or  element-based
partitionings.      Second,     it      is     well     suited     for
single-instruction-multiple-data (SIMD)  parallel machines.  Thus, one
can expect to implement this preconditioner on a multiprocessor system
based  on a  multi(many)-core  architecture exploiting  two levels  of
parallelism.   Third, as  indicted by  the experimental  results, this
method is  not as sensitive to  indefiniteness as ILUs or 
sparse approximate inverse preconditioners.   A fourth appeal, shared by
all the approximate inverse-type  methods, is that an SLR preconditioner
can be easily updated in  the sense that if it 
does not yield satisfactory performance, it can easily be improved without
forfeiting the work performed so far in  building it. 

The  paper is  organized  as follows:  in Section~\ref{sec:backg},  we
  introduce   the  DD   framework  and    Schur  complement
techniques.     A  spectral   analysis    will    be   proposed
Section~\ref{sec:specanaly}.  The SLR preconditioner will be discussed
in Section~\ref{sec:LowRank} followed by  implementation details
in Section~\ref{sec:impl}.  Numerical  results   of  model  problems  and
general   symmetric   linear   systems   are  presented   in   Section
\ref{sec:exp}, and we conclude in Section~\ref{sec:concl}.

\section{Background: sparse linear systems and the DD framework} 
\label{sec:backg}
In \cite{MLR-1}   we   introduced    a   method   based   on   a
divide-and-conquer  approach  that   consisted  in  approximating  the
inverse of a matrix $A$ by essentially the inverse of its $2 \times 2$
block-diagonal   approximation  plus   a  low-rank   correction.  This
principle  was  then  applied  recursively  to each  of  the  diagonal
blocks. We observed that there is often \emph{a
  decay  property} when  approximating the  inverse of  a matrix  by the
inverse  of a close-by  matrix in other contexts.
 By  this we  mean that  the difference
between the  two inverses has very rapidly  decaying eigenvalues, which
makes  it  possible  to  approximate  this  difference  by  small-rank
matrices. The best framework  where this property
takes  place is  that of  DD which is 
emphasized in this paper.

\subsection{Graph partitioning}
Figure \ref{fig:part} shows two 
standard ways of partitioning a graph \cite{Saad-book2}.
On the left side is a \emph{vertex-based} partitioning that is
common in the sparse matrix  community where it is also
referred to as graph partitioning by \emph{edge-separators}. A vertex
is an  equation-unknown pair and the partitioner subdivides the vertex
set into $p$ partitions, i.e., $p$ non-overlapping subsets
whose union is equal to the original vertex set.
On the right side is an \emph{edge-based} partitioning, which, in contrast,
consists of  assigning edges to subdomains.
This is also called graph partitioning by \emph{vertex separators} in the 
graph theory community.

\begin{figure}[h!]
\caption{Two classical ways of partitioning a graph, vertex-based partitioning
(left) and edge-based partitioning (right).
\label{fig:part}} 
\centerline{ 
\includegraphics[width=0.4\textwidth]{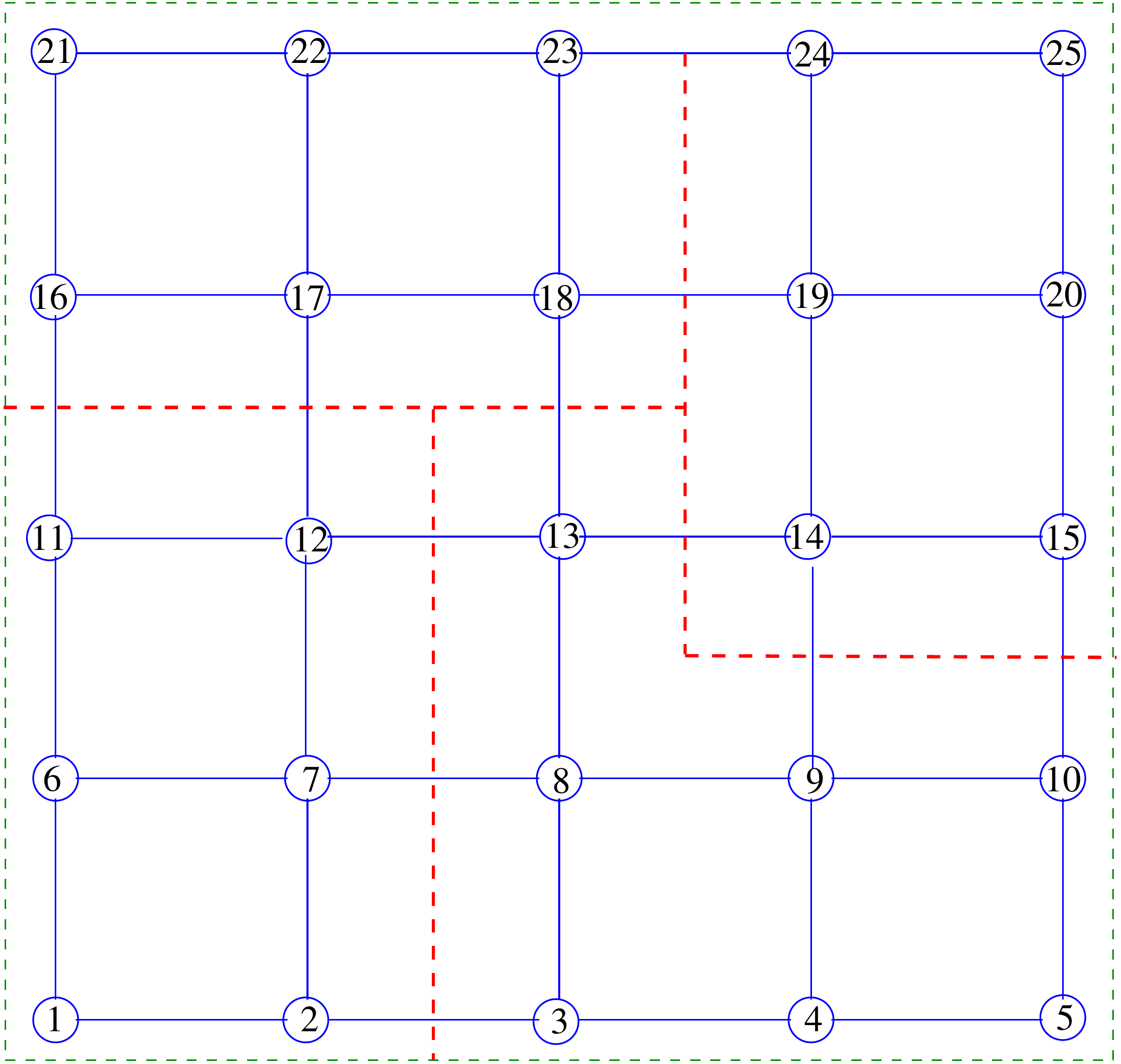}
\hspace{0.05\textwidth}
\includegraphics[width=0.4\textwidth]{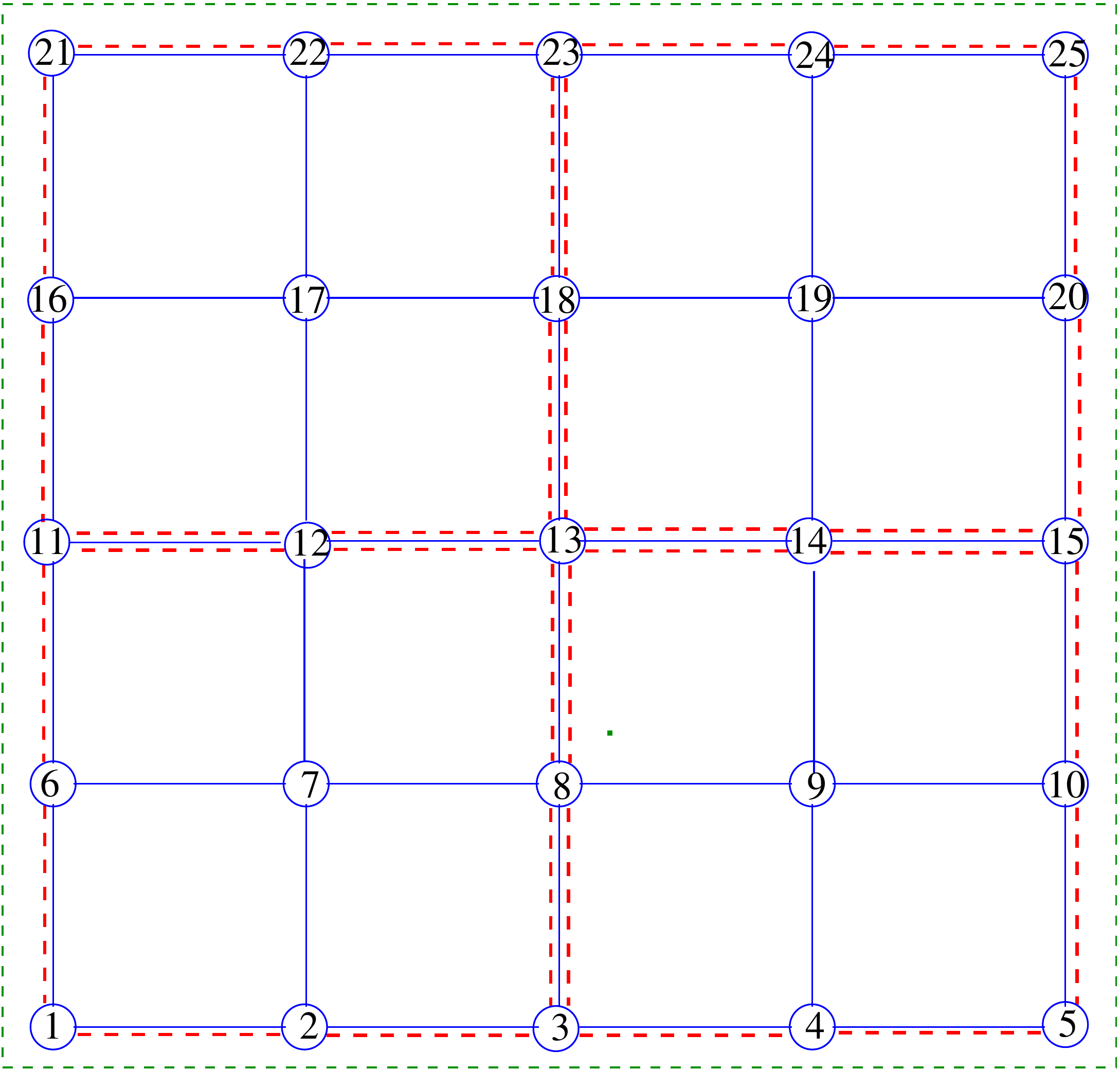}}
\end{figure} 

From the perspective of a  subdomain, one can distinguish $3$ types of
unknowns: (1) interior unknowns, (2)  local interface unknowns,
(3)  and   external   interface  unknowns.   
This  is   illustrated  on  the  top  of
Figure~\ref{fig:locsys}.
In   a  vertex-based partitioning, interior unknowns
are  those   coupled  only  with local  unknowns;
 local  interface
unknowns are those coupled with  both external and local unknowns; and
external  interface  unknowns  are  those   that  belong  to  other
subdomains and  are coupled with local interface unknowns.  
In an edge-based partitioning the local and external interface 
are merged into one set consisting all nodes that are \emph{shared} by 
a subdomain and its neighbors while interior nodes are those nodes
that are \emph{not shared}.

\begin{figure}[ht]
\caption{A local view of a distributed sparse matrix: vertex-based partitioning (top-left), edge-based partitioning (top-right), and the matrix representation (bottom). \label{fig:locsys}}
\begin{center}
\vspace{0.5em}
\subfigure{
\includegraphics[scale=0.4]{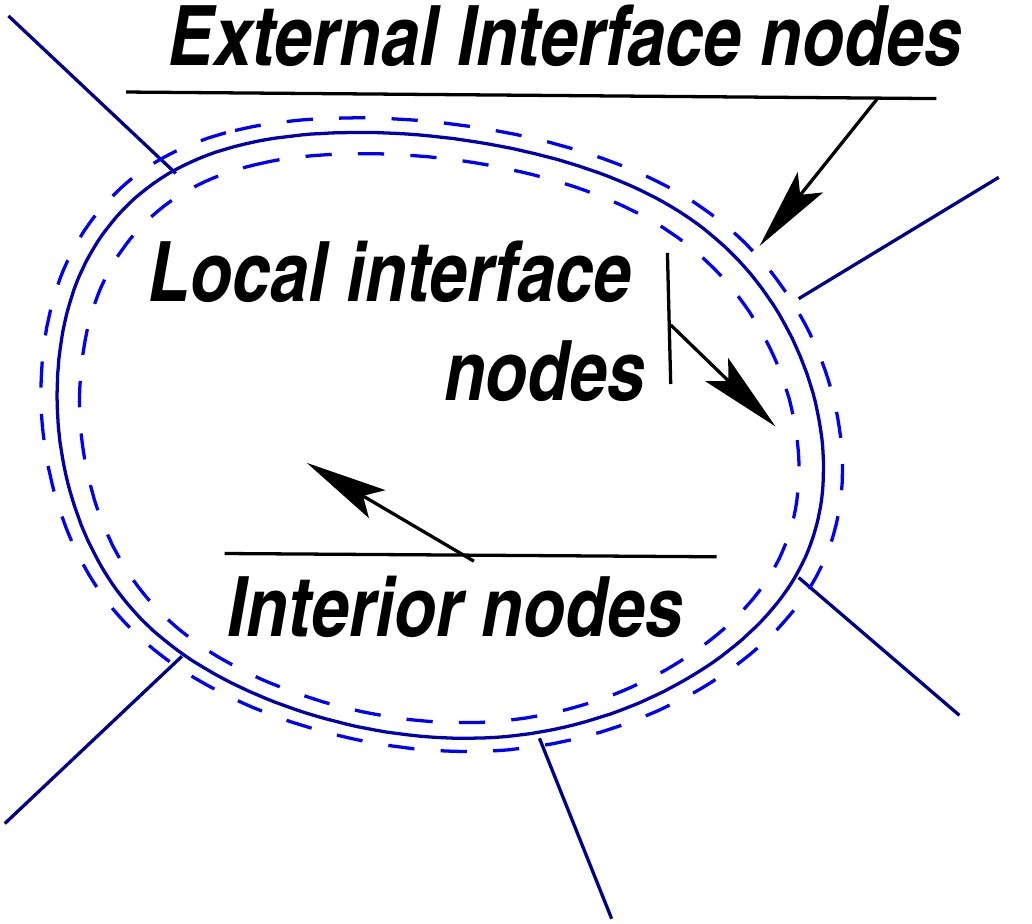}
}
\hspace{2em}
\vspace{0.5em}
\subfigure{
\includegraphics[scale=0.4]{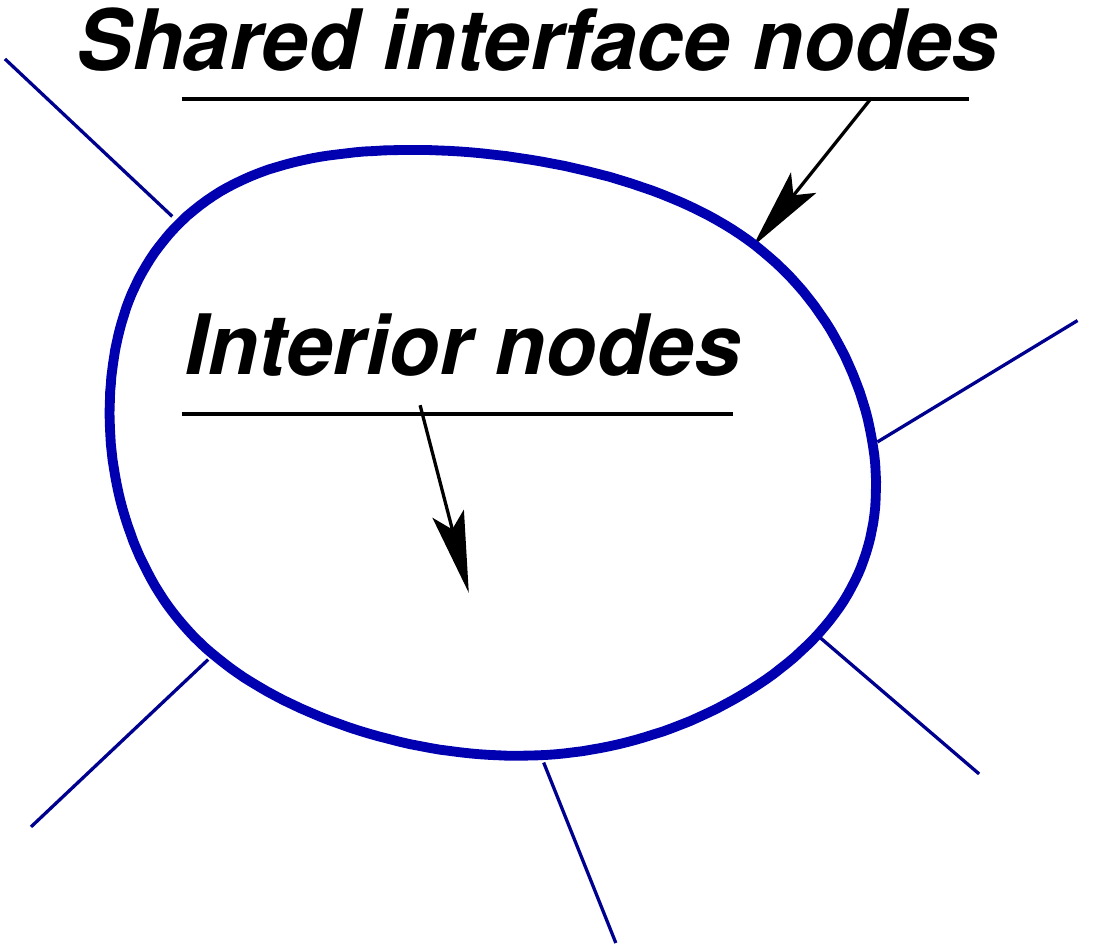}
}
\subfigure{
\includegraphics[scale=0.4]{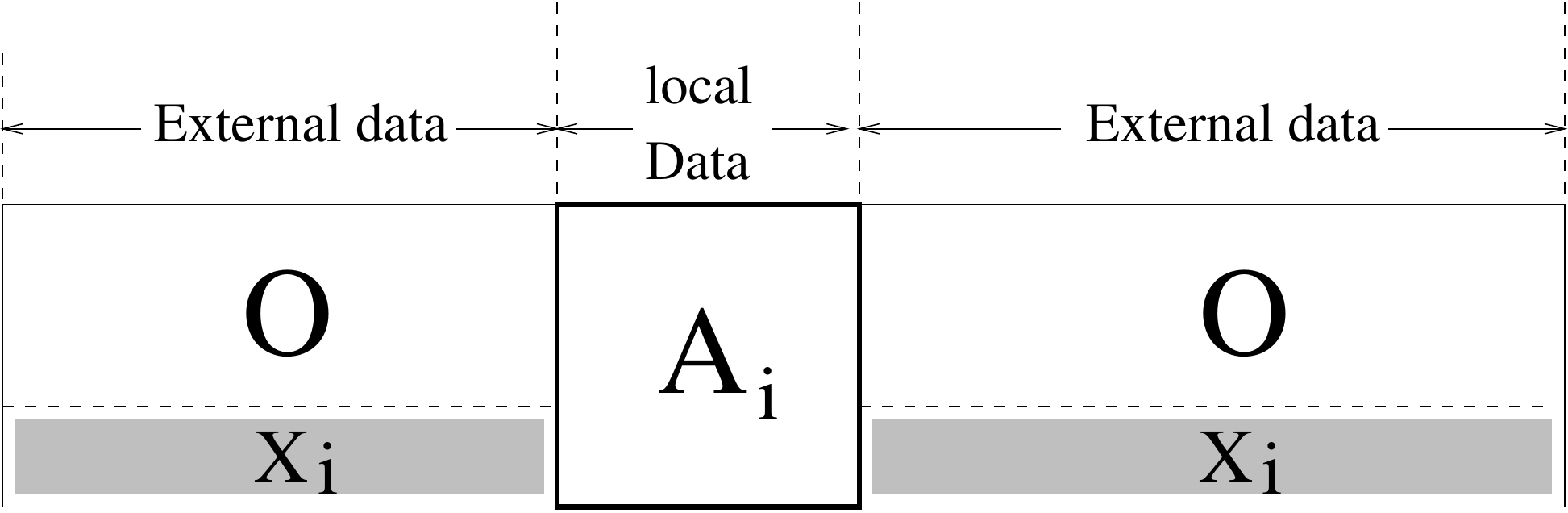}
}
\end{center}
\end{figure}

For both types of partitionings, the rows of the matrix assigned to subdomain $i$ can be split into two
parts: a local  matrix $A_i$ which acts on the  local unknowns and an
interface  matrix   $X_i$  which   acts  on  the   external 
interface
unknowns (shared unknowns for edge-based partitioning).
 Local unknowns in each subdomain are reordered so that the
interface unknowns  are listed  after the interior ones.  Thus,
each vector of local unknowns $x_i$ is split into two parts: a subvector
$u_i$ of  the internal components  followed by a subvector $y_i$  of the
  interface  components.   The  right-hand-side vector  $b_i$  is
conformingly split into  subvectors $f_i$ and $g_i$.
Partitioning the  matrix according to this splitting,  the local system
of equations can be written as
\begin{equation}\label{eq:locsys2}
\xpmatrix{
 B_i  &   E_i \cr 
 E_i^T  &   C_i }
 \xpmatrix{ u_i \cr y_i}
 + 
 \xpmatrix{ 0 \cr     \sum_{j \in N_i} E_{ij} y_j } 
 = 
 \xpmatrix{ f_i \cr g_i}.
\end{equation}
Here $N_i$  is   the set  of the indices of the
subdomains that are neighboring  to   $i$.
The term  $E_{ij} y_j$ is a part of  the product 
which reflects the contribution  to the local equation from
the neighboring  subdomain  $j$. 
The  result of  this multiplication affects
only local interface equations, which is indicated by the zero in 
the top part of the second  term  of   the left-hand side    of
\nref{eq:locsys2}.
If we denote by $\mathcal{Y}_i$ the set of the local interface unknowns of subdomain $i$, 
then the global interface  $\mathcal{Y}$ is given by $\mathcal{Y}=\bigcup_{i=1}^{p} \mathcal{Y}_i$, and let $y$ and $g$ be the subvectors of $x$ and $b$ corresponding to $\mathcal{Y}$.  Note that in the case of the vertex-based partitioning, we have $\mathcal{Y}_i \cap \mathcal{Y}_j = \varnothing$, for $i \neq j$ such that $y^T=[y_1^T, y_2^T, \cdots, y_p^T]$ and $g^T=[g_1^T, g_2^T, \cdots, g_p^T]$.
If we stack all interior unknowns $u_1, u_2, \ldots, u_p$
into a vector $u$ in this order, and
we reorder the equations so that
$u$ is listed first followed by $y$, we obtain
a global system that has the following form:
\begin{equation} \label{eq:Global}
\def\arraystretch{1.1}
\left(
\begin{array}{cccc|c}
B_1      &        &   &    &  E_{1} \cr
         & B_2    &   &      & E_{2} \cr
   &        &   \ddots &  &  \vdots \cr
         &        &   &   B_p       & E_p \cr
         \hline       
 E_1^T    & E_2^T   &   \ldots &  E_p^T   &   C      \cr
\end{array}
\right)
\xpmatrix{
u_1 \cr u_2 \cr \vdots \cr u_p \cr y} =
\xpmatrix{
f_1 \cr f_2 \cr \vdots \cr f_p \cr g},
\end{equation}
or a more compact form,
\begin{equation} \label{eq:Global2}
\begin{pmatrix}B   &  E \cr  E^T & C \end{pmatrix}
\begin{pmatrix} u \cr y \end{pmatrix} 
=
\begin{pmatrix} f \cr g \end{pmatrix}.
\end{equation}

An illustration is shown in Figure~\ref{fig:Bmat} for the vertex-based
and the edge-based partitionings of $4$ subdomains for a 2-D Laplacian
matrix. Each of these two partitioning methods has its advantages and
disadvantages.  In the present work, we will focus on
the edge-based  partitioning, but  this  approach is
also applicable to the situation of a vertex-based partitioning.

\begin{figure}[th!]
\caption{An example of a 2-D Laplacian matrix which is partitioned into $4$ subdomains with  edge separators (left) and vertex separators (right), respectively. \label{fig:Bmat}}
\begin{center}
\subfigure{
\includegraphics[scale=0.5]{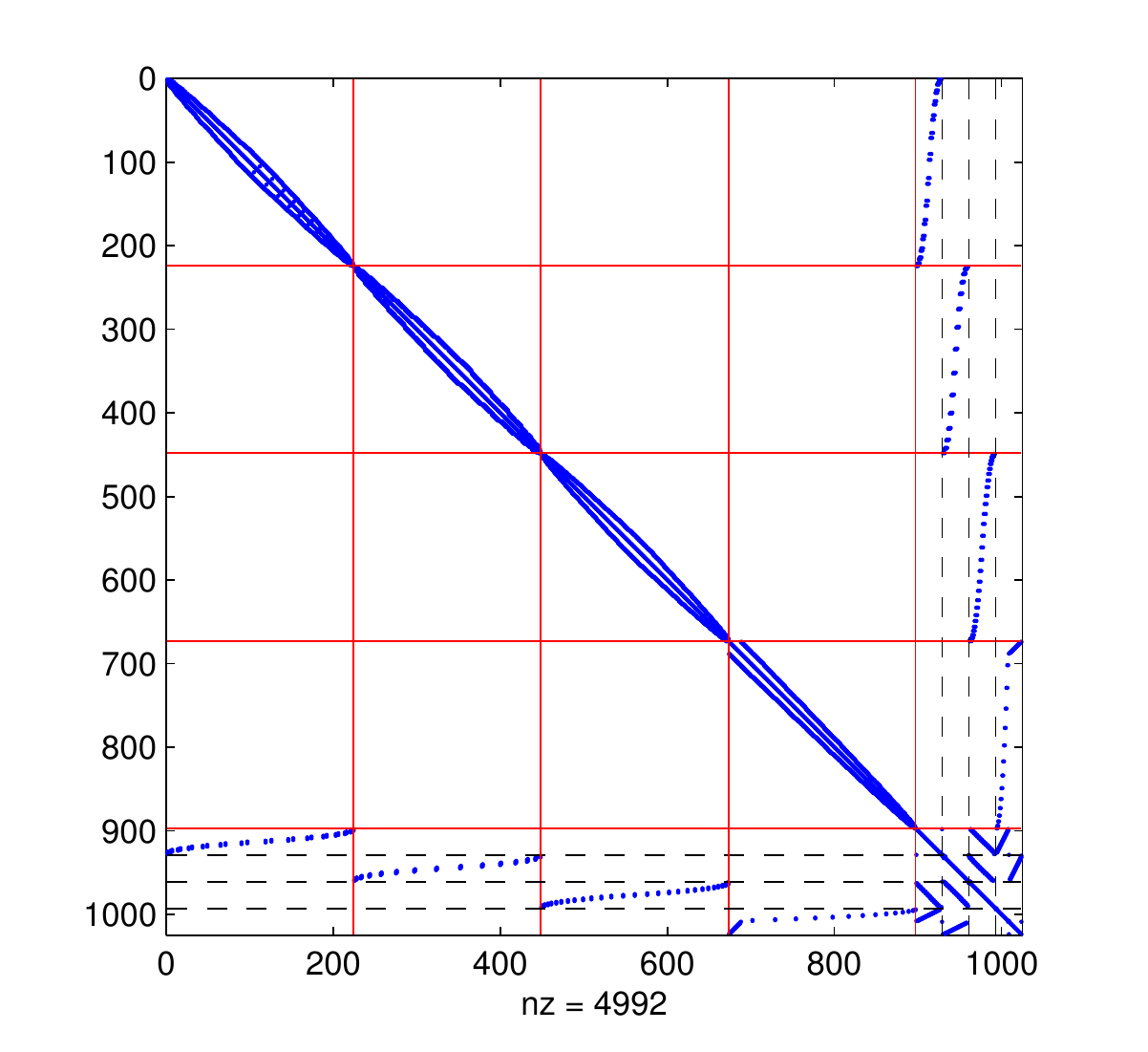}
}
\subfigure{
\includegraphics[scale=0.5]{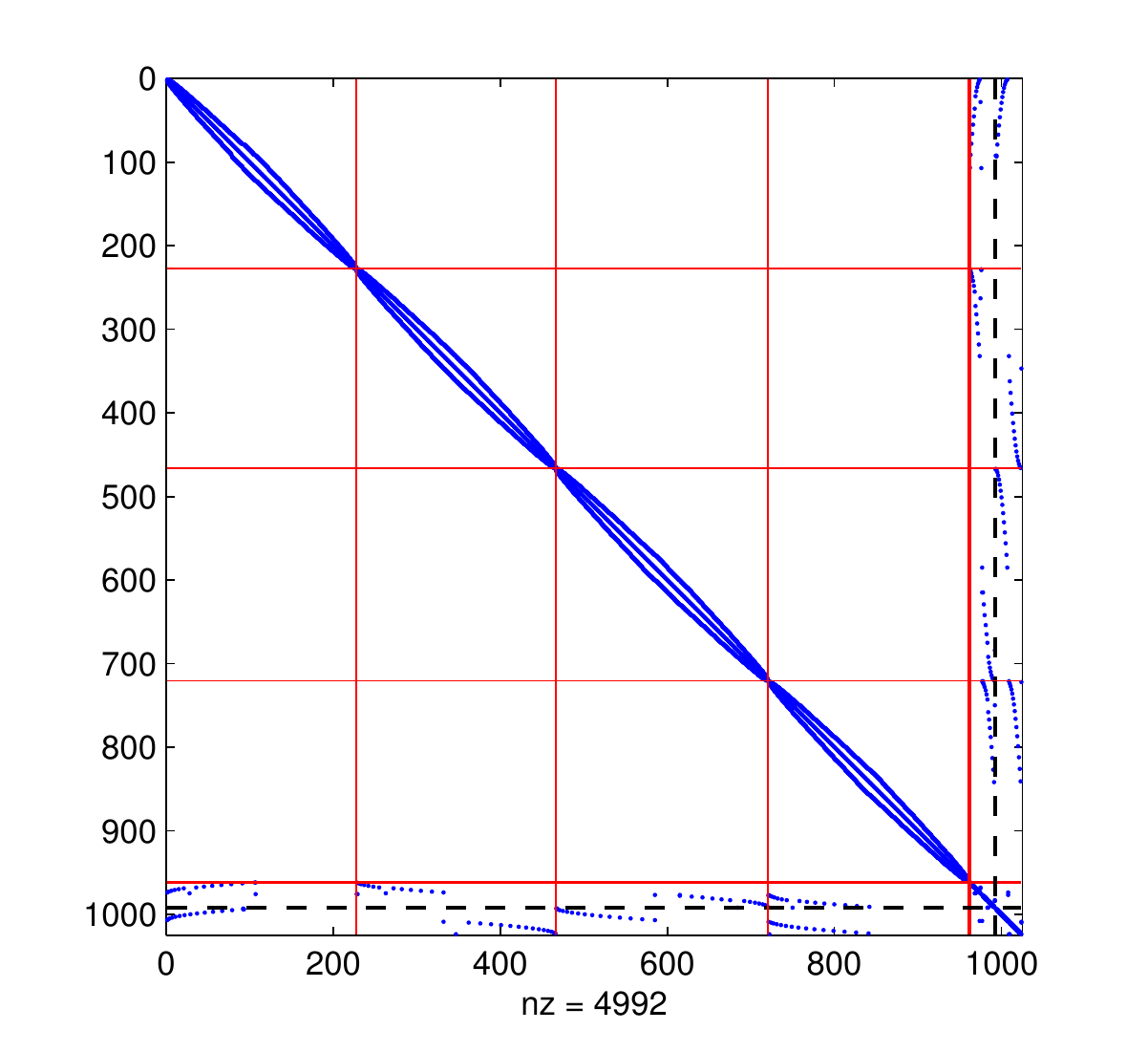}
}
\end{center} 

\end{figure}

A popular way of solving a global matrix in the form
of \eqref{eq:Global2} is to exploit the Schur complement techniques 
that eliminate the interior unknowns $u_i$ first and then focus on computing 
in some way the interface unknowns. 
A novel approach based on this principle is proposed in the next section.

\subsection{Schur complement techniques}
\label{sec:decay} 
To solve the system \eqref{eq:Global2} obtained from a DD reordering,
a number of techniques rely on 
the following basic block factorization 
\eq{eq:Schur}
\begin{pmatrix} B   &  E \cr  E^T & C \end{pmatrix} = 
\begin{pmatrix} I & \cr  E^T B\inv &  I \end{pmatrix} 
\begin{pmatrix} B  &  E   \cr  &  S \end{pmatrix} 
\quad \mbox{with} \quad  S = C - E^T B\inv E , 
\en
where $S\in \RR^{s\times s}$ is  the `Schur complement' matrix.
If an approximate solve with  the matrix $S$ is available then
one can easily solve the original system by exploiting  the above factorization. In this case note that
this will require two solves with $B$ and one solve with $S$.
In  classical ILU-type  preconditioners, e.g., in  a two-level  ARMS
method~\cite{Saad-Suchomel-ARMS}, an approximation to the   Schur complement
 $S$ is  formed by  dropping small terms and then an ILU  factorization 
of $S$ is obtained. In contrast,  the  SLR preconditioner
introduced in this  paper \emph{approximates the inverse of  $S$ directly}
by the  sum of  $C^{-1}$ and  a low-rank  correction term, resulting
in improved  robustness for  indefinite problems.   Details on
the  low-rank property for  $S^{-1}-C^{-1}$ will  be discussed  in the
next section.

\section{Spectral analysis} \label{sec:specanaly}
In this section we study the fast eigenvalue
 decay property of $S\inv - C\inv$.
In other words, our goal is to show that 
$S\inv \approx C\inv + \mathrm{LRC}$, where LRC stands for 
low-rank correction matrix.

\subsection{Decay properties of $\mathbf{S\inv -C\inv}$}
\label{sec:SinvC}
Assuming that the matrix $C$ in \eqref{eq:Global} is SPD and 
$C=LL^T$
is its Cholesky factorization, then we can write
\begin{equation} \label{eq:S}
 S = L\left(I-L\inv E^T B\inv EL\invt\right)L^T 
\equiv L (I-H) L^T.
\end{equation}
Consider now  the spectral factorization of $H\in \RR^{s\times s}$
\begin{equation} \label{eq:H}
H = L\inv E^T B\inv E L\invt = U \Lambda U^T,
\end{equation}
where $U$ is unitary,
and $\Lambda = \mathrm{diag}\left(\lambda_1,\ldots,\lambda_s \right)$ 
is the diagonal matrix of  eigenvalues.
When $A$ is SPD, then $H$ is at least Symmetric 
Positive Semi-Definite (SPSD) and the
following lemma
shows that the eigenvalues $\lambda_i$'s are all less than one.
\begin{lemma} \label{lem:eigH}
Let
$H=L\inv E^T B\inv E L\invt$ and assume that $A$
is SPD. Then we have $0\leq\lambda_i<1$,
for each eigenvalue  $\lambda_i$ of $H$, $i=1,\ldots,s$.
\end{lemma}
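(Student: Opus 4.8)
The plan is to read off both bounds directly from the two available expressions for the Schur complement: the defining formula $S = C - E^T B\inv E$ and the factored form $S = L(I-H)L^T$ of \eqref{eq:S}, together with the classical fact that the Schur complement of an SPD matrix with respect to an SPD principal submatrix is itself SPD. No clever estimate is needed; the whole statement is a consequence of congruence invariance of (semi)definiteness.

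First I would dispatch the lower bound. Since $A$ is SPD, any principal submatrix of it is SPD, so $B$ is SPD, hence so is $B\inv$, and therefore $E^T B\inv E$ is symmetric positive semidefinite. As $H = L\inv (E^T B\inv E) L\invt$ is a congruence transformation of an SPSD matrix, it is itself SPSD, so every eigenvalue satisfies $\lambda_i \geq 0$. (This also re-derives the remark made just before the lemma that $H$ is at least SPSD.)

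For the strict upper bound I would use \eqref{eq:S} rewritten as $I - H = L\inv S L\invt$, which reduces the claim to showing that $S$ is SPD. This is standard, and follows in one line from the block factorization \eqref{eq:Schur}: for any $x \neq 0$ one has $x^T S x = \hat{x}^T A \hat{x}$ with $\hat{x} = \xpmatrix{-B\inv E x \\ x} \neq 0$, which is strictly positive because $A$ is SPD. Hence $I - H$, being congruent to the SPD matrix $S$, is SPD; equivalently, since the spectral decomposition $H = U\Lambda U^T$ in \eqref{eq:H} gives $I - H = U(I-\Lambda)U^T$, the diagonal matrix $I - \Lambda$ is positive definite, i.e. $1 - \lambda_i > 0$ for every $i$. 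Combining the two bounds gives $0 \leq \lambda_i < 1$.

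I do not expect any real obstacle here; the only substantive ingredient is the positive definiteness of $S$, and that is itself immediate from the factorization \eqref{eq:Schur} already displayed. An alternative to invoking the Schur-complement fact is to note that the nonzero eigenvalues of $H$ coincide with those of $B\inv E C\inv E^T$ and to bound the latter using that the (reordered) matrix $\xpmatrix{B & E \\ E^T & C}$ is SPD; but the congruence argument above is the cleanest route and is the one I would present.
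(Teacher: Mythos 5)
Your proof is correct and follows essentially the same route as the paper's: both bounds come down to $H$ being SPSD and $S$ being SPD, transferred to the eigenvalues of $H$ by a congruence/similarity with $L$. The only difference is that you supply the one-line verification that $S$ is SPD (via $x^T S x = \hat{x}^T A \hat{x}$), whereas the paper simply asserts it as a standard fact.
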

\begin{proof}
If $A$ is SPD, then $B$, $C$ and $S$ are all SPD.
Since an arbitrary eigenvalue $\lambda(H)$ of $H$ satisfies
\[
\lambda(H)=\lambda(C\inv E^TB\inv E)=\lambda(C\inv(C-S))=1-\lambda(C\inv S) <
1,
\]
and $H$ is at least SPSD, we have $0\leq\lambda_i<1$.
\end{proof}

From \eqref{eq:S}, we know that the inverse of $S$ reads
\begin{equation}
S\inv = L\invt(I-H)\inv L\inv. \label{eq:Sinv1} 
\end{equation}
Thus, we wish to show that \emph{the matrix $(I-H)\inv $ can be well approximated by an identity matrix plus a low rank matrix}, from which it would
follow that  $S\inv \approx C\inv + \mathrm{LRC}$ as desired.
We have the following relations,
\eq{eq:Diff1}
(I-H)\inv - I = 
L^T S\inv L - I  = 
L^T (S\inv -C\inv ) L \equiv X,
\en
from which we obtain:
\eq{eq:SinvCinv}
S\inv = C\inv + L\invt X L\inv . 
\en
Note that the  eigenvalues of $X$ are the same as  those of the matrix
$S\inv  C -  I$.  Thus,  we will  ask the  question: Can  $X$  be well
approximated  by  a  low rank  matrix?  The  answer  can be  found  by
examining the  decay properties  of the eigenvalues  of $X$,  which in
turn  can be  assessed by  checking the  rate of  change of  the large
eigenvalues of $X$.  We can state the following result.
\begin{lemma}
The matrix $X $ in \nref{eq:Diff1} has the nonnegative eigenvalues
$\theta_k = \lambda_k /(1-\lambda_k)$
for $k=1,\cdots, s$, where $\lambda_k$ is the eigenvalue
of the matrix $H$ in \eqref{eq:H}.
\end{lemma}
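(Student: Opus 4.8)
The plan is to diagonalize everything through the spectral factorization of $H$ that was already introduced in \eqref{eq:H}. Writing $H = U\Lambda U^T$ with $U$ unitary and $\Lambda = \mathrm{diag}(\lambda_1,\ldots,\lambda_s)$, the first point to check is that $I-H$ is actually invertible so that $X$ is well defined; this is immediate from Lemma~\ref{lem:eigH}, which guarantees $0\le\lambda_k<1$, hence $1-\lambda_k>0$ for every $k$. Then $I-H = U(I-\Lambda)U^T$ and, since $U$ is unitary, $(I-H)\inv = U(I-\Lambda)\inv U^T$.

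Next I would simply subtract the identity inside the same unitary conjugation: from \eqref{eq:Diff1},
\[
X = (I-H)\inv - I = U\left[(I-\Lambda)\inv - I\right]U^T,
\]
which exhibits a spectral factorization of $X$ itself, with the \emph{same} eigenvectors (columns of $U$) as $H$. Reading off the diagonal, the $k$-th eigenvalue of $X$ is
\[
\frac{1}{1-\lambda_k} - 1 = \frac{\lambda_k}{1-\lambda_k} =: \theta_k ,
\]
which is exactly the claimed formula.

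Finally, nonnegativity follows because $\lambda_k\ge 0$ and $1-\lambda_k>0$, so each $\theta_k\ge 0$; one may also remark that the scalar map $t\mapsto t/(1-t)$ is strictly increasing on $[0,1)$, so the ordering of the $\theta_k$ matches that of the $\lambda_k$, a fact that will be convenient when discussing the decay rate of the eigenvalues of $X$. Honestly I do not anticipate a genuine obstacle here: the only thing to be careful about is the well-definedness of $(I-H)\inv$, and that is handed to us by the preceding lemma; the rest is elementary functional calculus applied to a symmetric matrix. The one bookkeeping subtlety worth stating explicitly is that $X$, being a symmetric matrix similar to $S\inv C - I$ (as noted right after \eqref{eq:SinvCinv}), has real eigenvalues, so the computation above indeed accounts for all of them.
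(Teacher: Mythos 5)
Your proof is correct and follows essentially the same route as the paper's: both read the eigenvalues of $X=(I-H)\inv - I$ off the spectral factorization of $H$ as $(1-\lambda_k)\inv-1=\lambda_k/(1-\lambda_k)$ and deduce nonnegativity from Lemma~\ref{lem:eigH}. You simply spell out the unitary diagonalization and the invertibility of $I-H$ that the paper leaves implicit.
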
 
\begin{proof}
From \nref{eq:Diff1} the eigenvalues of the matrix $X$ are 
$(1-\lambda_k)\inv -1  = \lambda_k/(1-\lambda_k)$. These are nonnegative
because from Lemma~\ref{lem:eigH} the $\lambda_k$'s are between  0 and 1.
\end{proof}

Now we consider the derivative of $\theta_k$ with respect to $\lambda_k$:
\[
\frac{d {\theta_k}}{d {\lambda_k}}=\frac{1}{(1- \lambda_k)^2} \ .
\]
This indicates a rapid increase when the $\lambda_k$ increases toward one. In other words, this means that the largest eigenvalues of $X$
tend to be well separated and 
$X$ can be approximated accurately by a low-rank matrix in general. 
Figure  \ref{fig:decay} illustrates  the decay of the  eigenvalues of the matrix  $L\invt X L\inv$  and the matrix $X$ for a 2-D Laplacian matrix, which is precisely  the matrix shown in Figure~\ref{fig:Bmat}.
As can be seen, using just a few eigenvalues and eigenvectors will represent 
the matrix $X$ (or $L\invt X L\inv)$ quite well.
In  this particular situation, $5$ eigenvectors (out of the total of $127$) will capture 
$82.5\% $ of $X$ and $85.1\% $ of $L\invt X L\inv$, 
whereas 10 eigenvectors will capture $89.7\%$ of $X$ and 
$91.4\% $ of $L\invt X L\inv$.

\begin{figure}[h!b]
\centering
\begin{minipage}[b][][c]{0.5\columnwidth}
\centering
\includegraphics[scale=0.45]{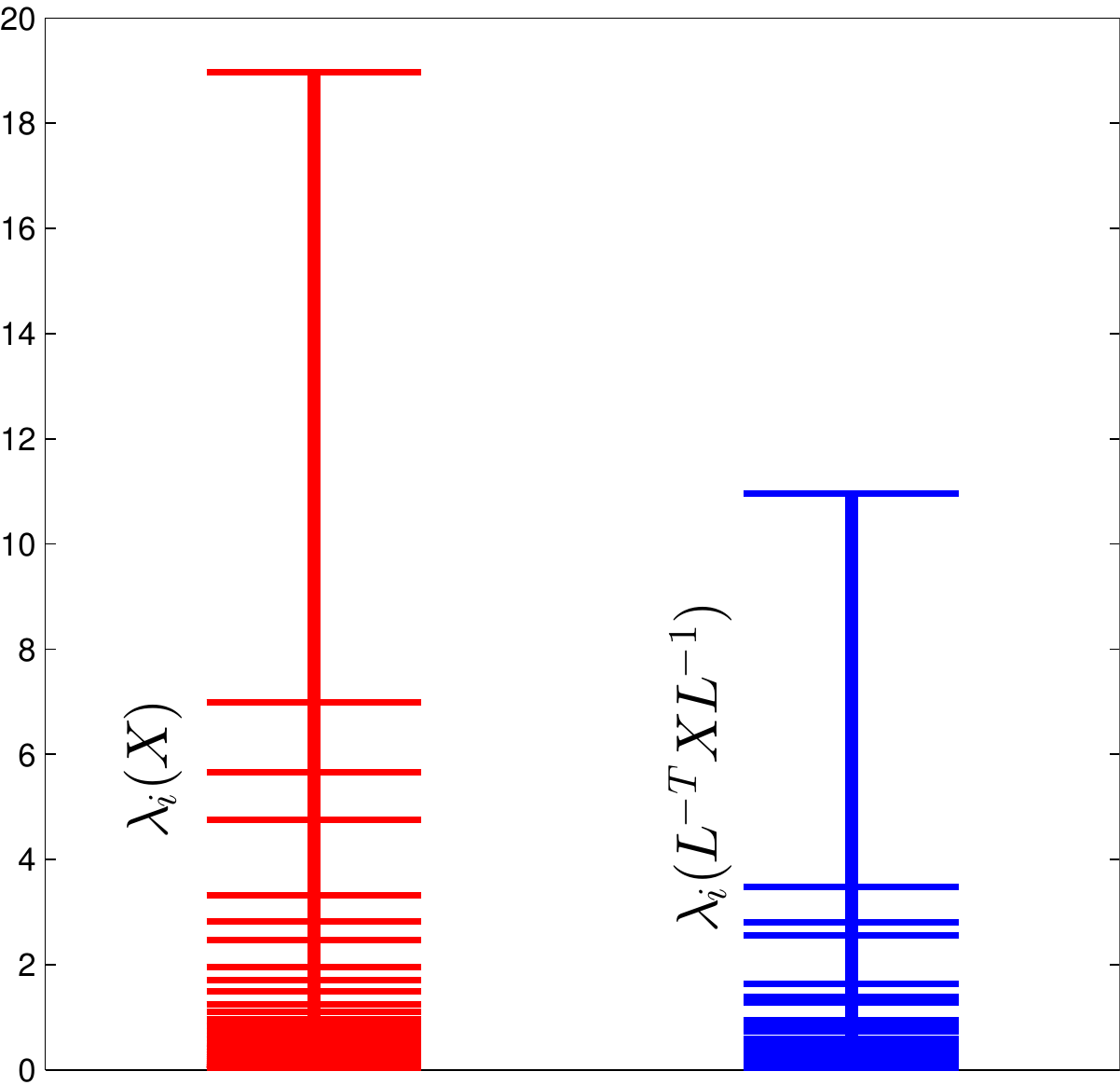}  
\end{minipage}%
\hspace{1em}
\begin{minipage}[b][][c]{0.4\columnwidth}
\centering
\caption{Illustration of 
the decay of eigenvalues of $X$ (left) and $S\inv - C\inv = L\invt X L\inv$  (right) for a 2-D Laplacian matrix with $n_x=n_y=32$, where the  domain is  decomposed into  $4$  subdomains (i.e., $p=4$), and the size of $S$ is $127$. $5$ eigenvectors  will capture 
$82.5\% $ of the spectrum of $X$ and $85.1\% $ of the spectrum of $L\invt X L\inv$, 
whereas $10$ eigenvectors will capture $89.7\%$ of the spectrum of $X$ and $91.4\% $ of the spectrum of $L\invt X L\inv$.}
\end{minipage}%
\label{fig:decay}
\end{figure} 

\subsection{Two-domain analysis in a 2-D model problem} \label{sec:twodomanaly}
The spectral analysis  of the matrix $S\inv -  C\inv$ is difficult for
general problems and general  partitionings.  In the simplest case when
the  matrix $A$ originates  from a  2-D Laplacian  on a  regular grid,
discretized by  centered differences, and it  is partitioned  into $2$
subdomains, the  analysis becomes feasible.  The goal  of this section
is  to show  that the  eigenvalues  of $X$  and $L^{-T}XL^{-1}$  decay
rapidly.

Assume that $-\Delta$  is discretized on a grid  $\Omega$ of size $n_x
\times  (2n_y+1)$  with Dirichlet  boundary  conditions  and that  the
ordering is  major along the  $x$ direction.  The grid  is partitioned
horizontally into  three parts: the two disconnected  $n_x \times n_y$
grids, namely $\Omega_1$  and $\Omega_2$, which are the  same, and the
$n_x   \times  1$   separator   denoted  by   $\Gamma$.   See   Figure
\ref{fig:2danaly}(a) for  an  illustration.   Let   $T_x$  be  the
tridiagonal matrix corresponding to  $\Gamma$ of dimension $n_x \times
n_x$  which discretizes  $ - \partial^2/\partial x^2$.  The  scaling term
$1/h^2$ is  omitted so  that $T_x$  has the constant  $2$ on  its main
diagonal and $-1$ on the co-diagonals. 
Finally, we denote by  ${A}$ the matrix which
results  from discretizing  $-\Delta$  on $\Omega$  and 
reordered  according      to     the partitioning 
$\Omega=\left\{\Omega_1,\Omega_2,\Gamma\right\}$. In $\Omega_1$ and $\Omega_2$,  the interface nodes are ordered at the end. Hence, ${A}$ has the
form: 
\begin{equation} \label{eq:matA}
{A} =\begin{pmatrix}
A_{y}   &      & E_{y} \\
     & A_{y}   & E_{y} \\
E_{y}^T & E_{y}^T & \hat{T}_x
\end{pmatrix},
\end{equation}
where $A_{y}$ corresponds to the $n_x \times n_y$ grid (i.e., $\Omega_1$ or $\Omega_2$), $E_{y}$ defines the couplings between $\Omega_1$ (or $\Omega_2$) and $\Gamma$, and the matrix $\hat{T}_x$ is associated with $\Gamma$, for which we have
\begin{equation}
\hat{T}_x = T_x + 2 I.
\end{equation}
Figure \ref{fig:2danaly}(b) is an illustration of the nonzero pattern of $A$.

\begin{figure}[h!t]
\caption{ Illustration of the matrix $A$ and the corresponding partitioning of the 2-D mesh. \label{fig:2danaly}
}
\begin{center}
\subfigure[Partition of a regular mesh into $3$ parts.]{
\includegraphics[scale=0.33]{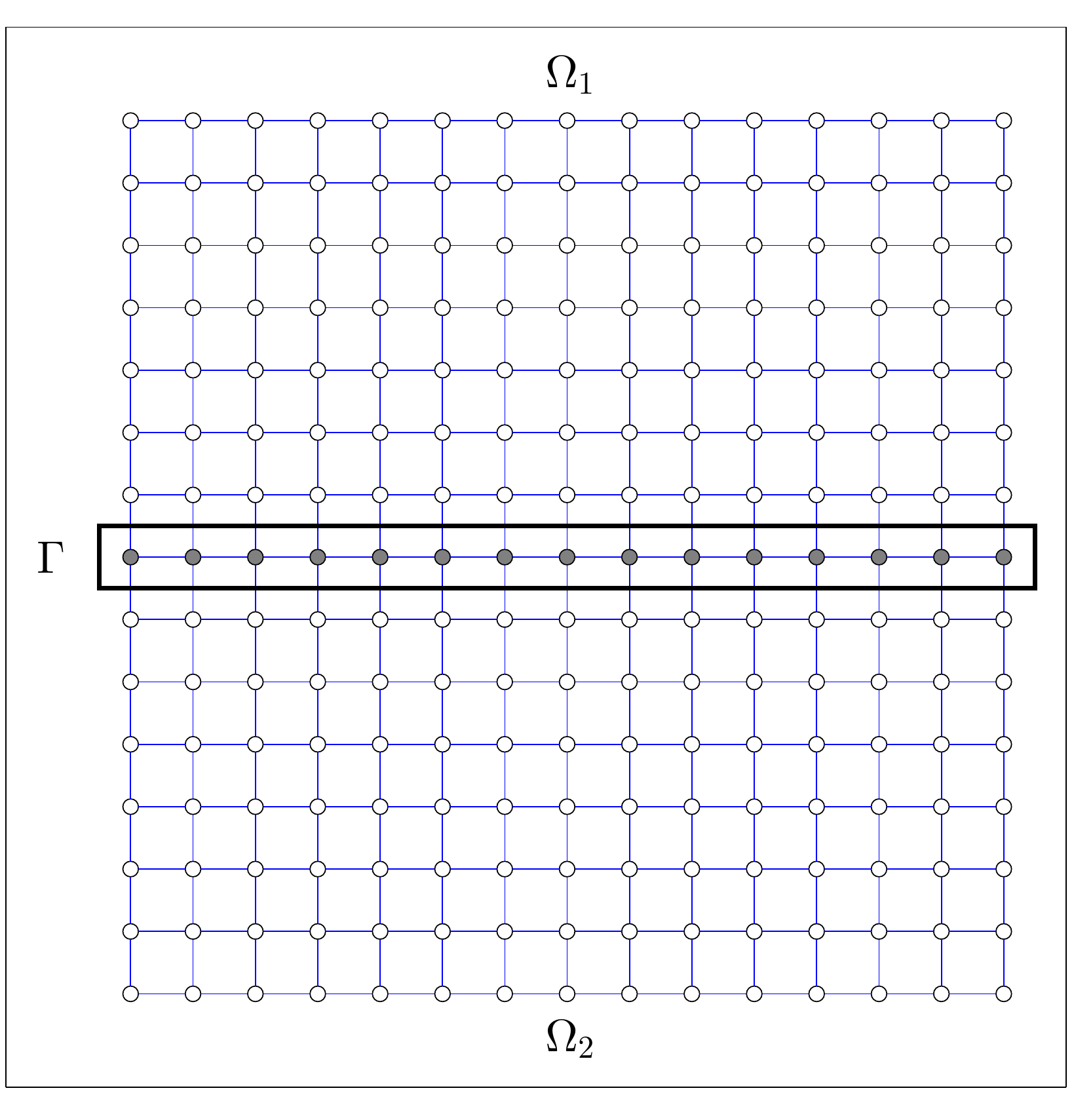}
}
\hspace{2em}
\subfigure[Nonzero pattern of the reordered matrix.]{
\includegraphics[scale=0.34]{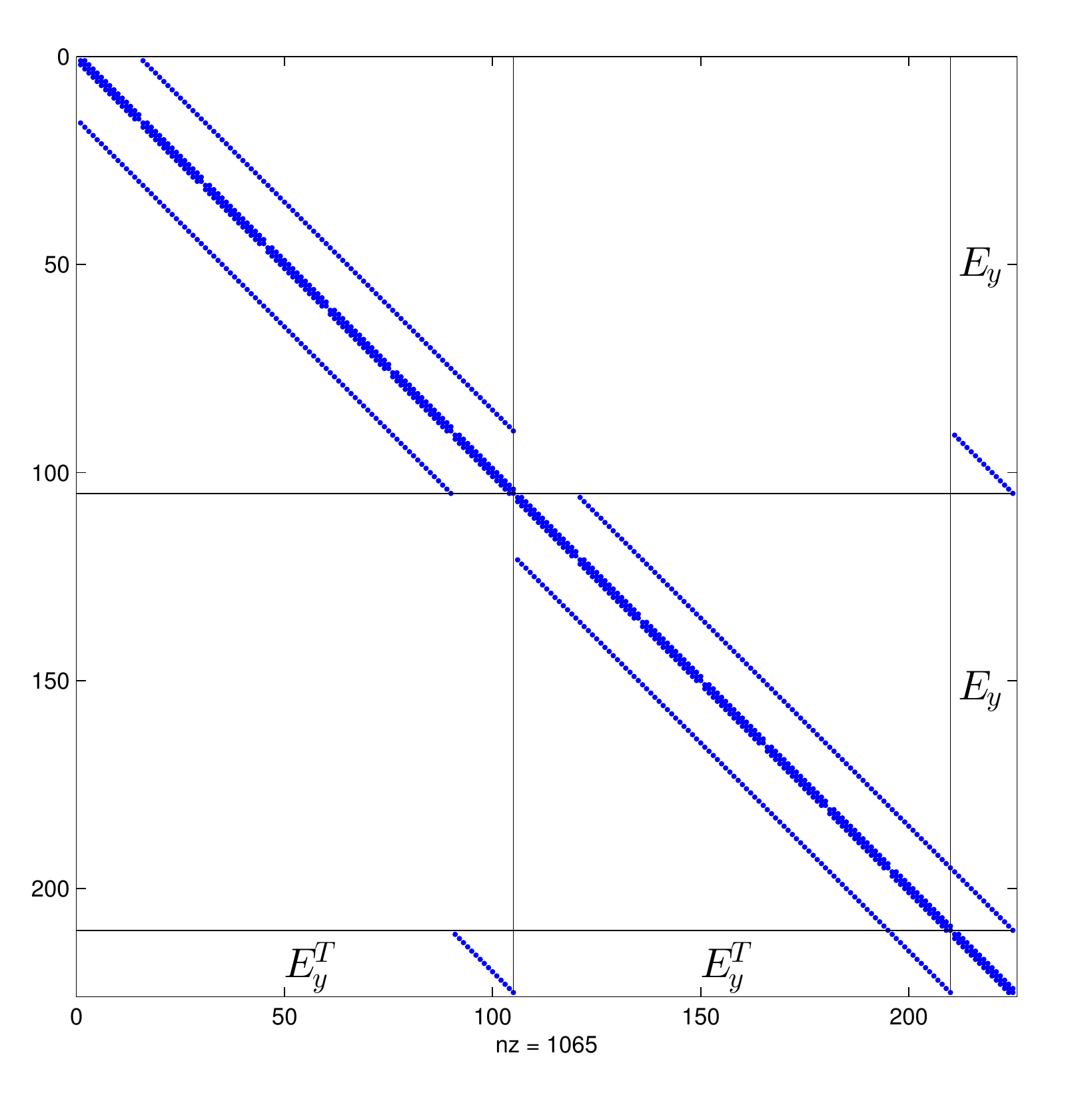}
} 
\end{center}
\end{figure}

Therefore, the Schur complement associated with $\Gamma$ in \eqref{eq:matA} reads
\begin{equation} \label{eq:SchurGamma}
S_{\Gamma} = \hat{T}_x - 2E_{y}^T A_{y}\inv E_{y} \ ,
\end{equation}
and the eigenvalues of $X$ and $L^{-T}XL^{-1}$ 
correspond to those of $S_{\Gamma}^{-1}\hat{T}_x - I$ and $S_{\Gamma}^{-1} - \hat{T}_x^{-1}$, respectively, in this case.
The coupling matrix $E_y$ has the form $E_y^T=(0, I_x)$, where $I_x$ denotes the identity matrix of size $n_x$. Clearly, the matrix $R_y =E_y^T A_y \inv E_y$ is simply the bottom right (corner) block of the 
inverse of $A_y$, which can be readily obtained from a standard block factorization.  
Noting that $A_y$ is of the form
\[
A_y = 
\begin{pmatrix} 
\hat T_x &   -I    &           &                   \cr
-I  &   \hat T_x   &  -I       &                   \cr
    & \ddots  &  \ddots   & \ddots            \cr
    &         &  \ddots   & \ddots  & -I      \cr
    &         &           &  -I     &  \hat T_x         
\end{pmatrix},\]
we write its  block LU factorization  as:
\[
A_y =
\begin{pmatrix} 
  \hspace{10pt}  I    &         &     &                   \cr
-D_1 \inv     &   I     &           &                   \cr
              & -D_2\inv   &  \ddots   &                   \cr
    &         &  \ddots   & \ddots  &         \cr
    &         &           &  - D_{n_y}\inv     &  I           
\end{pmatrix} 
\begin{pmatrix} 
 D_1  &  - I       &           &                   \cr
       &   D_2     &  -I       &                   \cr
    &         &  \ddots   & \ddots            \cr
    &         &           & \ddots  &  -I     \cr
    &         &           &     &   D_{n_y}     
\end{pmatrix}  . 
\]
The $D_i$'s satisfy the recurrence: 
$D_{k} = \hat T_x - D_{k-1}\inv $, for $k=2, \cdots, n_y$ starting
with $D_{1} = \hat T_x$. 
The result is that each $D_k$ is a continued fraction in $\hat T_x$. 
As can be easily verified $R_y $ is equal to $D_{n_y}\inv$.
The scalar version of the above recurrence is of the form:
\[
d_k = 2 a - \frac{1}{d_{k-1}} \ , \quad k=2,\cdots, n_y \ , \quad \mbox{with} \quad 
d_1 \equiv 2a \ . 
\]
The $d_i$'s are the diagonal entries of the U-matrix of an LU factorization similar to the one
above but applied to the $n_y \times n_y $
tridiagonal matrix $T$ that has
$2 a$ on the diagonal and $-1$ on the co-diagonals. 
For reasons that will become clear we replaced the matrix $\hat T_x $ by the scalar $2 a$.
We are interested in the inverse of the last entry, i.e.,   $d_{n_y} \inv$.
Using Chebyshev polynomials we can easily see that
$d_{n_y}\inv = U_{n_y-1} ( a  ) / U_{n_y} ( a  ) $ where 
$U_k(t)$ is the Chebyshev polynomial of the second kind (for details, see  Appendix):
\[
U_k(t) =
\frac{
\sinh ( (k+1) \cosh \inv(t))} 
{\sinh(\cosh \inv(t))} \ .
\]
In terms of the original matrix $A_y$, the scalar $a$ needs to be substituted by
$ \hat T_x/2 = I + T_x/2$.
In the end,
the matrix $S\inv -C\inv = S_\Gamma\inv - \hat T_x\inv$ is a rational 
function of $ \hat T_x/2 $. We denote this rational function  by $s(t)$, i.e.,
$S_\Gamma\inv -\hat T_x\inv = s(\hat T_x/2)$ and note that $s$ is well-defined in terms
of the scalar $a$. Indeed, from the above: 
\[
s(a) = 
\frac{1}{2a - 2 \frac{U_{n_y-1} (a)}{U_{n_y} (a)}} 
- \frac{1}{2a}  
= \frac{U_{n_y-1} (a)}{ a (2 a U_{n_y} (a) - 2 U_{n_y-1} (a)) }
= \frac{U_{n_y-1} (a)}{ a \left[ U_{n_y+1} (a) - U_{n_y-1} (a) \right] }  .
\] 
Everything can now be expressed in terms of the eigenvalues
of $\hat T_x/2$ which are 
\eq{eq:etak0}
\eta_k = 1 + 2 \sin^2 \frac{k \pi }{2 (n_x + 1)} \ ,
\quad k=1,\cdots, n_x \ .
\en 
We can then state the following.

\begin{proposition} 
Let $\eta_k$ be defined in \nref{eq:etak0} and
$\theta_k  = \cosh\inv( \eta_k) $, $k=1,\cdots, n_x$.  Then,
the eigenvalues $\gamma_k$ of $ S_\Gamma\inv - \hat T_x\inv$ 
are given by 
\eq{eq:gammak0}
\gamma_k  = 
\frac{\sinh (  n_y  \theta_k ) }
{ \eta_k \left[ \sinh ((n_y+2) \theta_k ) 
- \sinh (n_y \theta_k) \right] } \ ,  \quad  k=1,\cdots, n_x \ .
\en 
\end{proposition}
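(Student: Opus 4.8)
The plan is to reduce $S_\Gamma\inv-\hat T_x\inv$ to a scalar rational function evaluated at the eigenvalues of $\hat T_x/2$, and then to rewrite the resulting quotient of Chebyshev polynomials in hyperbolic form. The first and only delicate step is to establish that $S_\Gamma\inv-\hat T_x\inv$ is a matrix function of the single matrix $\hat T_x$. The matrix $A_y$ is block tridiagonal with the constant diagonal block $\hat T_x$ and constant co-diagonal block $-I$, and $E_y^T=(0,I_x)$ merely extracts the trailing $n_x\times n_x$ block, so $R_y=E_y^T A_y\inv E_y$ is exactly the bottom-right block of $A_y\inv$. Let $Q$ be the orthogonal matrix of sine vectors that diagonalizes $T_x$, hence $\hat T_x=T_x+2I$ as well, with $Q^T(\hat T_x/2)Q=\mathrm{diag}(\eta_1,\dots,\eta_{n_x})$ and the $\eta_k$ of \nref{eq:etak0}. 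Conjugating $A_y$ blockwise by $\mathrm{diag}(Q,\dots,Q)$ and permuting, $A_y$ becomes orthogonally similar to a block-diagonal matrix whose $k$-th block is the $n_y\times n_y$ tridiagonal matrix with $2\eta_k$ on the diagonal and $-1$ on the co-diagonals; by the scalar recurrence recalled just before the statement (see Appendix), the bottom-right entry of the inverse of that block is $U_{n_y-1}(\eta_k)/U_{n_y}(\eta_k)$. Hence $R_y=r(\hat T_x/2)$ with $r(a)=U_{n_y-1}(a)/U_{n_y}(a)$, and consequently $S_\Gamma=\hat T_x-2R_y$, $S_\Gamma\inv$, and $S_\Gamma\inv-\hat T_x\inv$ are all functions of $\hat T_x$, diagonalized by the same $Q$.

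It then follows that the eigenvalues of $S_\Gamma\inv-\hat T_x\inv$ are $\gamma_k=s(\eta_k)$, where $s$ is the scalar rational function obtained just above the statement,
\[
s(a)=\frac{1}{2a-2U_{n_y-1}(a)/U_{n_y}(a)}-\frac{1}{2a}=\frac{U_{n_y-1}(a)}{a\left[U_{n_y+1}(a)-U_{n_y-1}(a)\right]},
\]
the last equality using $2aU_{n_y}(a)-2U_{n_y-1}(a)=U_{n_y+1}(a)-U_{n_y-1}(a)$, i.e.\ the three-term recurrence for $U$. Finally I substitute the hyperbolic representation $U_m(a)=\sinh\bigl((m+1)\cosh\inv a\bigr)/\sinh\bigl(\cosh\inv a\bigr)$ at $a=\eta_k$; this is legitimate because $\eta_k\ge 1$, so $\theta_k=\cosh\inv(\eta_k)$ is real and, for $k\ge 1$, strictly positive, whence $\sinh\theta_k\neq 0$. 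Writing $U_{n_y-1}(\eta_k)=\sinh(n_y\theta_k)/\sinh\theta_k$ and $U_{n_y+1}(\eta_k)=\sinh((n_y+2)\theta_k)/\sinh\theta_k$, the common factor $1/\sinh\theta_k$ cancels between the numerator and denominator of $s(\eta_k)$, leaving exactly \nref{eq:gammak0}; nonnegativity of $\gamma_k$ follows from Lemma~\ref{lem:eigH} (or directly, since $\sinh$ is increasing and $\eta_k>0$).

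The main obstacle is thus the simultaneous-diagonalization argument: one must verify carefully that the special form $E_y^T=(0,I_x)$ together with the constant-block structure of $A_y$ makes $R_y$, and hence $S_\Gamma\inv-\hat T_x\inv$, a function of $\hat T_x$ alone — this is precisely what legitimizes passing from ``the eigenvalues of the matrix $S_\Gamma\inv-\hat T_x\inv$'' to ``the scalar values $s(\eta_k)$''. Once that reduction is in hand, the remainder is the bookkeeping of inserting the Chebyshev identities into $s(\eta_k)$, which is routine.
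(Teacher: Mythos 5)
Your proposal is correct and follows essentially the same route as the paper: reduce $R_y=E_y^TA_y\inv E_y$ to the corner block of $A_y\inv$, pass to the scalar tridiagonal recurrence, identify $d_{n_y}\inv=U_{n_y-1}(a)/U_{n_y}(a)$ via Chebyshev polynomials of the second kind, and evaluate the rational function $s$ at the eigenvalues $\eta_k$ of $\hat T_x/2$ in hyperbolic form. The only difference is organizational — you diagonalize $\hat T_x$ by $Q$ up front and then run the scalar recurrence eigenvalue by eigenvalue, whereas the paper keeps $D_k$ as a matrix continued fraction in $\hat T_x$ and substitutes eigenvalues at the end; your version makes explicit the simultaneous-diagonalization step that the paper leaves implicit.
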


Note that we have $e^{\theta_k} = \eta_k + \sqrt{\eta_k^2 -1} $ and
$\sinh (n \theta_k) = [(\eta_k + \sqrt{\eta_k^2 -1} )^{n} -  
(\eta_k + \sqrt{\eta_k^2 -1} )^{-{n}}]/2$, which is well
approximated by 
$(\eta_k + \sqrt{\eta_k^2 -1})^{n}/2$ for a large $n$.
In the end, assuming $n_y$ is large enough, we have 
\begin{equation} \label{eq:gamma}
\gamma_k \approx 
\displaystyle\frac{1}
{ \eta_k \left[(\eta_k + \sqrt{\eta_k^2 -1})^2 -1\right] } 
= 
\frac{1}
{ 2 \eta_k \left[(\eta_k^2-1) + \eta_k \sqrt{\eta_k^2 -1} \right] } \ .
\end{equation}
This shows that for those eigenvalues of $\hat T_x$ that are close to
one, we would have a big amplification to the value $1/\eta_k$.
These eigenvalues correspond to the smallest eigenvalues of $T_x$.
We can also show that
\[
\gamma_k \approx 
\frac{1}{2} \left[
\frac{1} {\sqrt{\eta_k^2 -1} } - 
\frac{1} {\eta_k} \right],
\] 
and for the eigenvalues $\zeta_k$ of $S_\Gamma\inv \hat{T}_x -I$, we have
\[
\zeta_k = 2\eta_k\gamma_k \approx \frac{\eta_k}{\sqrt{\eta_k^2-1}}-1 \ .
\]
An illustration of $\gamma_k$, $\zeta_k$ and $1/\eta_k$ is shown in Figure \ref{fig:eiggamma}.

\begin{figure}[h!t]
\caption{Illustration of the decay of the eigenvalues $\gamma_k$ of the matrix $S\inv-C\inv$ and the eigenvalues $\zeta_k$ of the matrix $S\inv C-I$, and $1/\eta_k$ for $-\Delta$ on a 2-D grid of size $n_x \times (2n_y+1)$ with $n_x=65,n_y=32$, which is partitioned into $2$ subdomains. \label{fig:eiggamma}}
\begin{center}
\subfigure{
\includegraphics[scale=0.4]{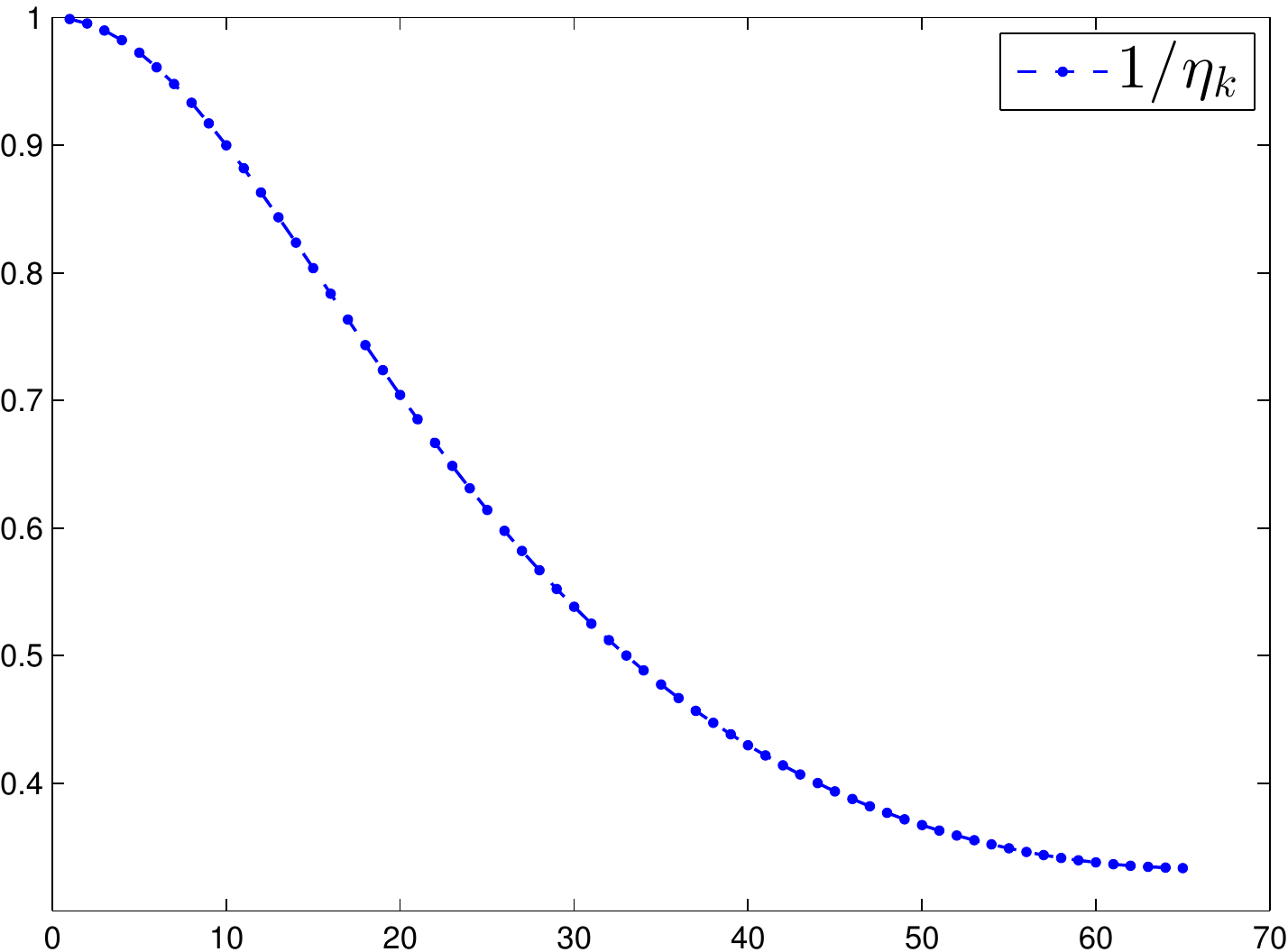}
}
\hspace{1em}
\subfigure{
\includegraphics[scale=0.4]{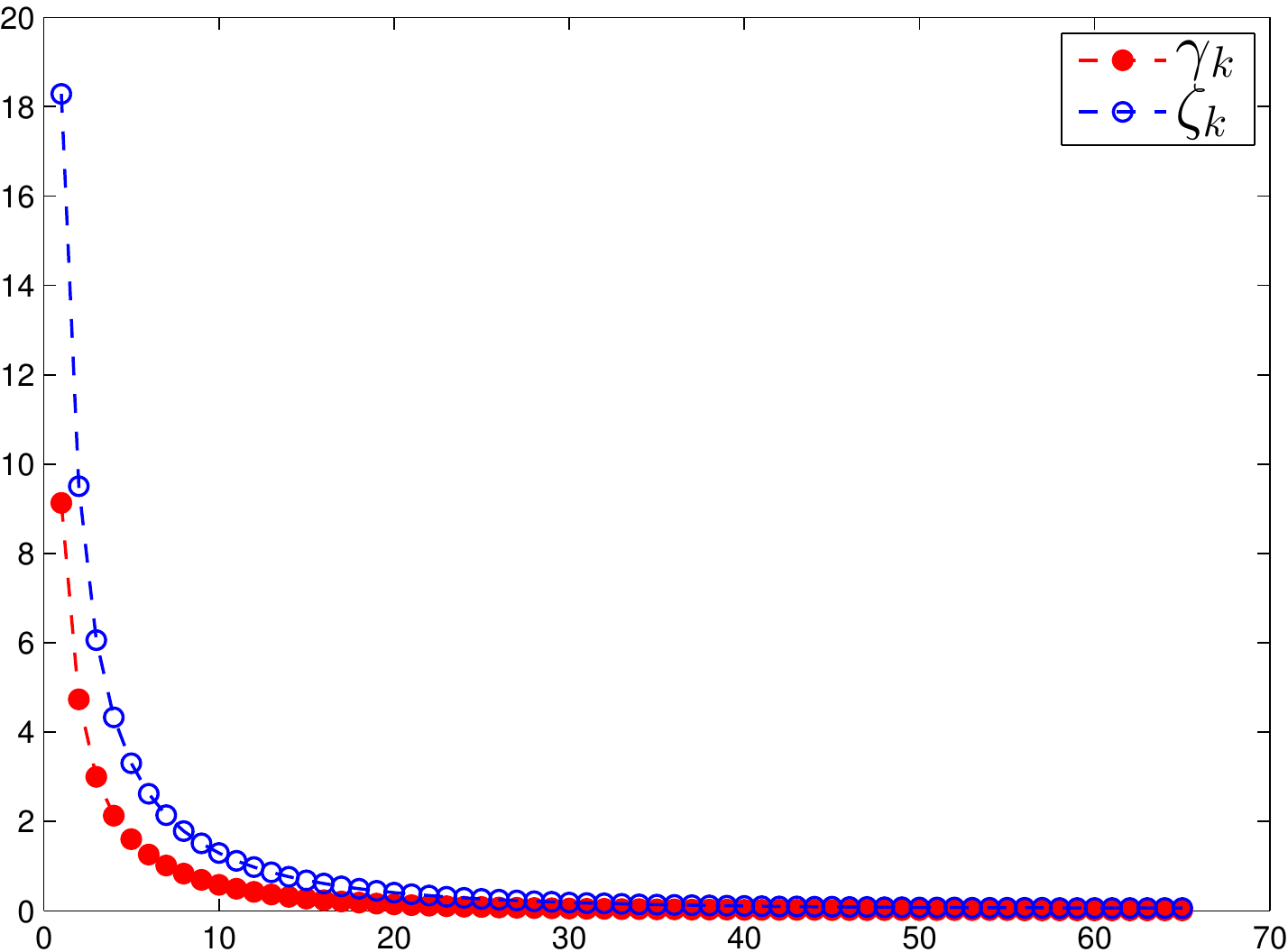}
}
\end{center}
\end{figure}

\section{Schur complement based preconditioning with low rank corrections}\label{sec:LowRank}

The goal of this section is  to build a preconditioner for a matrix of
the  form  \nref{eq:Global2} as  obtained  from  the  DD method.   The
preconditioning matrix $M$ is of the form
\begin{equation} \label{eq:M}
M = 
\begin{pmatrix} I & \cr  E^T B\inv &  I \end{pmatrix} 
\begin{pmatrix} B  &  E   \cr  &  \tilde{S} \end{pmatrix},
\end{equation}
where $\tilde{S}$ is an approximation  to $S$.  
The above is approximate factorization of \nref{eq:Schur} whereby 
(only) $S$ is approximated. 
In fact we will approximate directly the inverse of $S$ instead of $S$
by exploiting low-rank properties.
Specifically, we seek an approximation of the form $\tilde{S}\inv=C\inv +
\mathrm{LRC}$.  From a  practical point of view, it  will be difficult
to compute directly an approximation   to the matrix  $S\inv -C\inv$,
since we do not  (yet) have an efficient means for solving
linear systems with the matrix $S$.   
Instead we will extract this approximation from that of the matrix 
$X$ defined  
in  Section   \ref{sec:SinvC}, see 
\eqref{eq:Diff1}.    Recall  the   expression  \eqref{eq:S}   and  the
eigen-decomposition of $H$ in \eqref{eq:H}, which yield,
\begin{equation} \label{eq:slr_S}
 S = L(I-U \Lambda U^T)L^T = LU(I- \Lambda )U^TL^T.
\end{equation}
The inverse of $S$ is then
\begin{equation}
S\inv = L\invt  U (I-\Lambda)\inv U^T  L\inv,
\end{equation}
which we write in the form,
\begin{equation} \label{eq:Sinv}
 S\inv = L\invt
\left(I+U [\left(I-\Lambda\right)\inv-I ] U^T\right)L\inv.
\end{equation}
Now, assuming  that $H$ has an approximation of the following form,
\begin{equation} \label{eq:Happrox}
\tilde{H} \approx U \tilde \Lambda U^T,
\end{equation}
we will obtain the following approximation to $S\inv$:
\begin{align}
\tilde S\inv &= L\invt  U (I-\tilde \Lambda)\inv U^T  L\inv,
\label{eq:tildeSinv} \\
&= C\inv + L\invt U [ (I-\tilde{\Lambda})\inv - I ] U^T L\inv.
\label{eq:tildeSinv2}
\end{align}

\begin{proposition} \label{prop:eigSS}
Let $S$ and $H$ be defined by \nref{eq:S} and \nref{eq:H} respectively and 
let $\Sigma=\mathrm{diag}(\sigma_1,\ldots,\sigma_s)$ with the
$\sigma_i$'s defined by
\begin{equation} \label{eq:eigSSinv}
\sigma_i = \frac{1-\lambda_i}{1-\tilde{\lambda}_i} \ , \quad i=1,\ldots,s.
\end{equation}
Then, the eigendecomposition of $S \tilde S\inv$ is given by:
\eq{eq:DiffS}
S \tilde S\inv 
= (L  U) \Sigma (L U) \inv.  
\en
\end{proposition}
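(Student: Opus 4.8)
The plan is to substitute the closed-form expressions already derived for $S$ and $\tilde S\inv$ and then exploit the orthogonality of $U$. Concretely, I would start from \eqref{eq:slr_S}, i.e.\ $S = LU(I-\Lambda)U^TL^T$, together with \eqref{eq:tildeSinv}, i.e.\ $\tilde S\inv = L\invt U(I-\tilde\Lambda)\inv U^T L\inv$, and form the product
\[
S\tilde S\inv = LU(I-\Lambda)U^T L^T \, L\invt U (I-\tilde\Lambda)\inv U^T L\inv .
\]
The middle factor $U^T L^T L\invt U$ equals $U^TU = I$, since $L^T L\invt = I$ and $U$ is unitary, so the product collapses to $S\tilde S\inv = LU(I-\Lambda)(I-\tilde\Lambda)\inv U^T L\inv$.

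Next I would identify the diagonal factor. Because $\Lambda$ and $\tilde\Lambda$ are diagonal, $(I-\Lambda)(I-\tilde\Lambda)\inv$ is diagonal with entries $(1-\lambda_i)/(1-\tilde\lambda_i)$, which by the definition \eqref{eq:eigSSinv} is exactly $\Sigma$. (Here one uses that $I-\tilde\Lambda$ is invertible, which holds because $\tilde\lambda_i<1$: each $\tilde\lambda_i$ is either an exact eigenvalue $\lambda_i<1$ of $H$ retained from Lemma~\ref{lem:eigH}, or is set to zero in the truncation.) This gives $S\tilde S\inv = LU\,\Sigma\,U^T L\inv$, and since $U$ is unitary we have $(LU)\inv = U^T L\inv$, so the right-hand side is precisely $(LU)\,\Sigma\,(LU)\inv$, the asserted eigendecomposition.

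I do not anticipate a genuine technical obstacle; the computation is a one-line cancellation. The one point worth flagging is conceptual rather than computational: although $S$ and $\tilde S\inv$ are each symmetric, their product is not symmetric in general, so the relevant eigenvector matrix is the (generally non-orthogonal) matrix $LU$ and the statement must be read as a diagonalization by similarity rather than an orthogonal one. It is also worth recording the immediate consequence that the spectrum of the preconditioned operator consists of the real numbers $\sigma_i$, each equal to $1$ on the part of the spectrum where $\tilde\lambda_i = \lambda_i$; this is the fact that feeds into the convergence analysis that follows.
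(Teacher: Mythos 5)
Your proof is correct and is essentially identical to the paper's: both substitute $S = LU(I-\Lambda)U^TL^T$ and $\tilde S\inv = L\invt U(I-\tilde\Lambda)\inv U^T L\inv$, cancel the middle factors, and read off $(LU)\Sigma(LU)\inv$. The extra remarks on the invertibility of $I-\tilde\Lambda$ and the non-orthogonality of the similarity are sensible but not needed beyond what the paper records.
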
 
\begin{proof}
From \eqref{eq:slr_S} and \eqref{eq:tildeSinv}, we have
\begin{align}
 S \tilde S\inv  &= LU(I-\Lambda)U^TL^T L\invt ( U (I-\tilde \Lambda)\inv
U^T ) L\inv \notag \\
&= (LU)(I-\Lambda)(I-\tilde \Lambda)\inv \notag
(U^T L\inv) = (L  U) \Sigma (L U) \inv.
\end{align}
\end{proof}

The simplest selection of $\tilde \Lambda$ is the one that ensures that the $k$
largest eigenvalues
of $(I-\tilde \Lambda)\inv$ match the largest eigenvalues 
of $(I- \Lambda)\inv$.
Assume that the eigenvalues of $H$ are $\lambda_1 \ge \lambda_2 \ge \cdots \ge
\lambda_s$, which means that the  diagonal entries $\tilde \lambda_i $  of
$\tilde \Lambda$ are selected such that
\begin{equation} \label{eq:Lambda-1}
\tilde \lambda_i = \left\{ \begin{array}{cl}
\lambda_i & \mbox{if} \quad  i\le k \\
 0  & \mbox{otherwise}
\end{array}.
\right.
\end{equation}
Proposition~\ref{prop:eigSS} indicates that in this case the eigenvalues of
$S\tilde{S}\inv$ are
\[
\left\{ \begin{array}{cl}
1  & \mbox{if} \quad  i\le k \\
1-\lambda_i  & \mbox{otherwise}
\end{array}.
\right.
\]
Thus,  we can infer that in this situation
$k$ eigenvalues of $S\tilde{S}\inv $ will take the value one and the other
$s-k$ eigenvalues $\sigma_i$ 
satisfy $0<1-\lambda_{k+1} \le \sigma_i < 1-\lambda_s < 1$.

Another choice for $\tilde{\Lambda}$, inspired by \cite{RliSaadMLR2},
will make the eigenvalues of $S\tilde{S}\inv $  larger than or equal to  one.
Consider defining $\tilde \Lambda$ such that
\eq{eq:main}
\tilde \lambda_i = \left\{ \begin{array}{cl}
\lambda_i & \mbox{if} \quad  i\le k \\
 \theta  & \mbox{if} \quad  i > k \\
\end{array} . 
\right.
\en 
Then, from \nref{eq:eigSSinv} the eigenvalues of
$S\tilde{S}\inv$ are
\begin{equation} \label{eq:eigDiffG}
\left\{ \begin{array}{cl}
1  & \mbox{if} \quad  i\le k \\
(1-\lambda_i) /  (1-\theta)
 & \mbox{if} \quad  i > k \\
\end{array}.
\right.
\end{equation}
The earlier definition of $\Lambda_k$ in \eqref{eq:Lambda-1} which
truncates the lowest eigenvalues of
$H$ to zero corresponds to selecting $\theta = 0$.
Note that for $i>k$, the eigenvalues can be made greater than  or equal to one by selecting
$ \lambda_{k+1} \le \theta < 1 $.  
In this case, the eigenvalues $\sigma_i $ for $i>k$ which are equal to
 $\sigma_i =  (1 - \lambda_i)/(1-\theta) $ belong to the 
interval 
\eq{eq:intv}
\left[1 , \quad \frac{1 - \lambda_s}{1-\theta}  \right] 
\subseteq
\left[1 , \quad \frac{ 1 }{1-\theta}  \right].  
\en 
Thus, the spectral condition number of the preconditioned matrix is $(1-\lambda_s)/(1-\theta)$.
The choice leading to the smallest 2-norm deviation is letting
$\theta = \lambda_{k+1} $. 
One question that may be asked is how does the  condition number
$ \kappa=\max \sigma_i / \min \sigma_i$ vary when $\theta $ varies between 0 and 1? 

First observe that  a general expression for the eigenvalues of  $S \tilde S\inv $ is given by \nref{eq:eigDiffG} regardless of the value of $\theta$. When $\lambda_{k+1} \le \theta <1$, we just saw that the spectral condition
number is equal to $(1-\lambda_s)/(1-\theta)$. The smallest value of this condition
number is reached when $\theta $ takes the smallest  value which,
recalling our restriction 
$\lambda_{k+1} \le \theta <1$,  is $\theta = \lambda_{k+1}$. 
There is a second  situation, which corresponds to 
when $\lambda_s \le  \theta \le \lambda_{k+1}$. Here the largest
 eigenvalue is still  $(1-\lambda_s)/(1-\theta)$ which is larger than one.
The smallest one is now smaller than one, which is 
$(1-\lambda_{k+1})/(1-\theta)$. So the condition number now is again
$(1-\lambda_s)/(1-\lambda_{k+1})$, which is independent of $\theta$ in the interval $[\lambda_s, \ \lambda_{k+1}]$.
The third and final situation corresponds to  the case
when $0 \le \theta \le \lambda_{s}$. The largest
 eigenvalue is now one, because $(1-\lambda_s)/(1-\theta) < 1$, while the smallest one is still
$(1-\lambda_{k+1})/(1-\theta)$. This leads to the condition number 
$(1-\theta)/(1-\lambda_{k+1})$ and the smallest spectral condition number for 
$\theta$ in this interval is reached when $\theta = \lambda_s$ leading to the 
same optimal condition number $(1-\lambda_s)/(1-\lambda_{k+1}) $.
This result is summarized in the following proposition.
\begin{proposition} 
The spectral condition number $\kappa(\theta) $ of $S \tilde S\inv$ is equal to
\eq{eq:kappa}
\kappa(\theta) = 
\begin{cases}
\displaystyle\frac{1-\theta}{1-\lambda_{k+1}}    & \mathrm{if} \; \theta \in [0, \lambda_s) \\[1em]
\displaystyle\frac{1-\lambda_s}{1-\lambda_{k+1}} & \mathrm{if} \; \theta \in [\lambda_s, \lambda_{k+1}] \\[1em]
\displaystyle\frac{1-\lambda_s}{1-\theta}        & \mathrm{if} \; \theta \in (\lambda_{k+1}, 1) 
\end{cases} 
\en 
It has a  minimum value of 
$(1-\lambda_s)/(1-\lambda_{k+1})$, which is reached for any $\theta $ in the second interval.
\end{proposition}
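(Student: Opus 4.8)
The plan is to compute the spectrum of $S\tilde S\inv$ explicitly from Proposition~\ref{prop:eigSS} together with the choice \nref{eq:main}, and then track its extreme values as $\theta$ ranges over $[0,1)$. First I would invoke \nref{eq:eigDiffG}: the eigenvalues $\sigma_i$ of $S\tilde S\inv$ are $1$ (with multiplicity $k$) and $(1-\lambda_i)/(1-\theta)$ for $i=k+1,\ldots,s$. Using the ordering $\lambda_1\ge\cdots\ge\lambda_s$ together with Lemma~\ref{lem:eigH} (so all $\lambda_i\in[0,1)$ and $1-\theta>0$), every tail index $i>k$ satisfies $\lambda_s\le\lambda_i\le\lambda_{k+1}$, whence
\[
\frac{1-\lambda_{k+1}}{1-\theta}\ \le\ \sigma_i\ \le\ \frac{1-\lambda_s}{1-\theta},
\]
with both bounds attained. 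So the spectrum of $S\tilde S\inv$ is the finite set $\{1\}\cup\{(1-\lambda_i)/(1-\theta):i>k\}$, whose smallest and largest tail members are exactly $(1-\lambda_{k+1})/(1-\theta)$ and $(1-\lambda_s)/(1-\theta)$.

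Next I would locate the value $1$ relative to the tail interval $[(1-\lambda_{k+1})/(1-\theta),\,(1-\lambda_s)/(1-\theta)]$; the condition number $\kappa(\theta)=\max_i\sigma_i/\min_i\sigma_i$ then follows from a three-case split driven by the signs of $\lambda_s-\theta$ and $\lambda_{k+1}-\theta$. (i) If $\theta\in[0,\lambda_s)$ then $1-\theta>1-\lambda_s\ge1-\lambda_i$ for all $i>k$, so every $\sigma_i<1$, the maximum eigenvalue is $1$, the minimum is $(1-\lambda_{k+1})/(1-\theta)$, and $\kappa(\theta)=(1-\theta)/(1-\lambda_{k+1})$. (ii) If $\theta\in[\lambda_s,\lambda_{k+1}]$ then $(1-\lambda_{k+1})/(1-\theta)\le1\le(1-\lambda_s)/(1-\theta)$, so $1$ is interior to the tail interval, the extreme eigenvalues are its endpoints, and $\kappa(\theta)=(1-\lambda_s)/(1-\lambda_{k+1})$. (iii) If $\theta\in(\lambda_{k+1},1)$ then $1-\theta<1-\lambda_{k+1}\le1-\lambda_i$, so every $\sigma_i>1$, the minimum eigenvalue is $1$, the maximum is $(1-\lambda_s)/(1-\theta)$, and $\kappa(\theta)=(1-\lambda_s)/(1-\theta)$. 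This is precisely \nref{eq:kappa}.

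For the minimum-value claim I would use the monotonicity of each branch: on $[0,\lambda_s)$ the quantity $(1-\theta)/(1-\lambda_{k+1})$ is strictly decreasing with limit $(1-\lambda_s)/(1-\lambda_{k+1})$ at $\theta=\lambda_s$; on $(\lambda_{k+1},1)$ the quantity $(1-\lambda_s)/(1-\theta)$ is strictly increasing with the same limit at $\theta=\lambda_{k+1}$; and on $[\lambda_s,\lambda_{k+1}]$ it is constant and equal to $(1-\lambda_s)/(1-\lambda_{k+1})$. Hence $\kappa(\theta)\ge(1-\lambda_s)/(1-\lambda_{k+1})$ for every $\theta\in[0,1)$, with equality exactly on the middle interval.

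There is no genuinely hard step here; the points that need care are that $1-\theta>0$, so that dividing through by it preserves inequalities; that the endpoints $\lambda_{k+1}$ and $\lambda_s$ are actually attained among the tail eigenvalues, so the displayed bounds are sharp and the endpoints really are eigenvalues of $S\tilde S\inv$; and the standing assumption $1\le k<s$, so that both the eigenvalue $1$ and at least one tail eigenvalue are present. The degenerate cases $k=0$, $k=s$, or $\lambda_s=\lambda_{k+1}$ merely collapse some of the three intervals or make some branches coincide, and do not affect the statement.
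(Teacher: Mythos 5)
Your proposal is correct and follows essentially the same route as the paper: both start from the explicit spectrum $\{1\}\cup\{(1-\lambda_i)/(1-\theta)\}_{i>k}$ given by \nref{eq:eigDiffG} and split into the same three cases according to the position of $\theta$ relative to $\lambda_s$ and $\lambda_{k+1}$, identifying the extreme eigenvalues in each case and then using the monotonicity of each branch to locate the minimum. Your added remarks on attainment of the endpoints and on positivity of $1-\theta$ only make explicit what the paper leaves implicit.
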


\begin{figure}[h!t]
\centering
\begin{minipage}[b][][c]{0.45\columnwidth}
\centering
\includegraphics[width=2.3in]{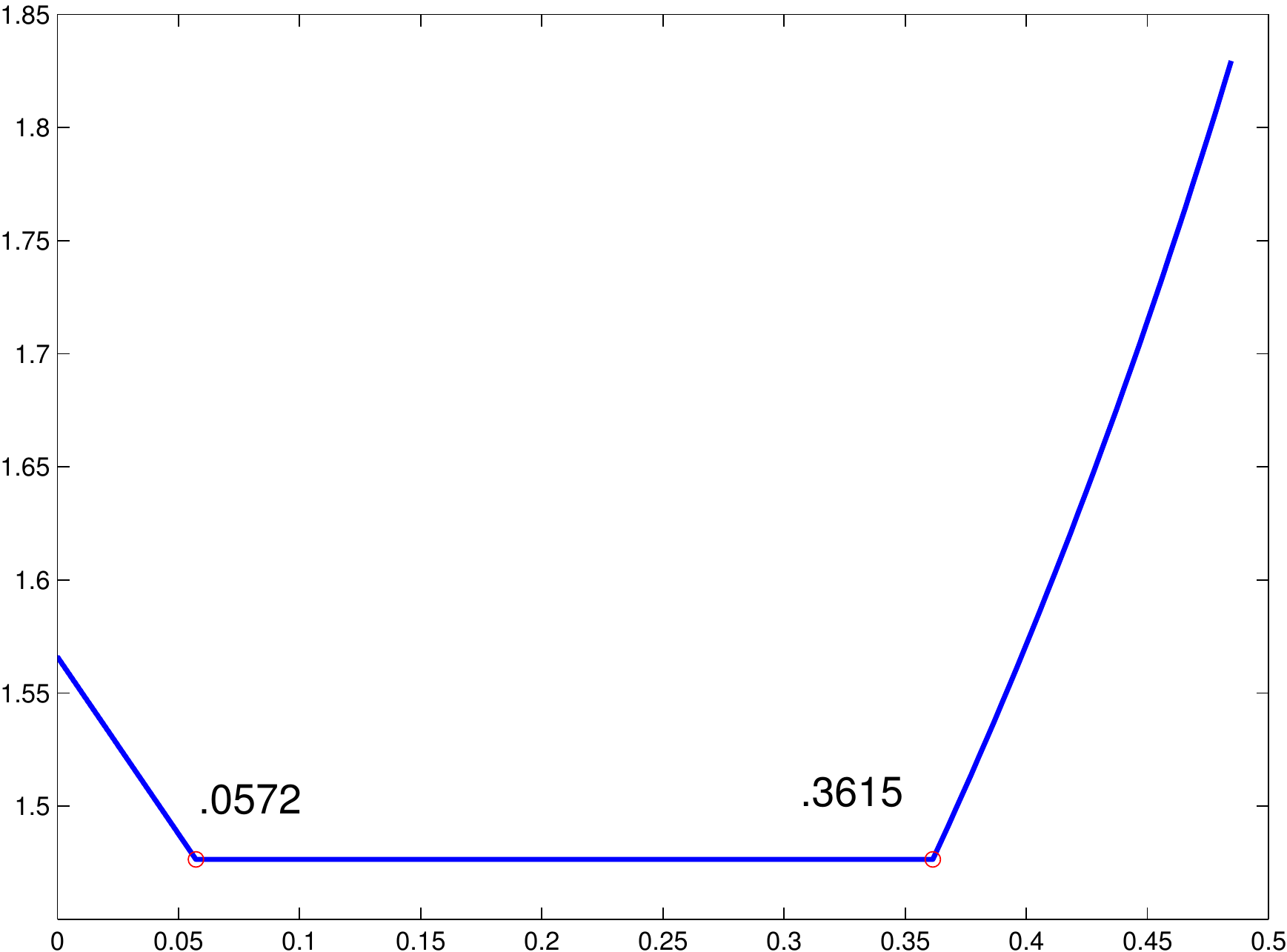}
\end{minipage}
\hspace{1em}
\begin{minipage}[b][][c]{0.45\columnwidth}
\centering
\caption{Illustration of the condition number $\kappa(\theta)$ for the
  case of a 2-D Laplacian  matrix with $n_x=n_y=256$ and the number of
  the  subdomains  $p=2$, where  $64$  eigenvectors  are used  $(i.e.,
  k=64)$. $\lambda_s=.05719$,  $\lambda_{k+1}=.36145$, and the optimal
  condition number is $\kappa=1.4765$.
\label{fig:kappa}}
\end{minipage}
\end{figure}
Figure \ref{fig:kappa},  shows the variation of the condition number
$\kappa(\theta)$ as a function of
  $\theta$, for a 2-D Laplacian  matrix.  One may
conclude  from this result  that there  is no  reason for  selecting a
particular  $\theta\in   \left[\lambda_s,  \lambda_{k+1}\right]$  over
another one as long as $\theta$ belongs to the middle interval, since
 the spectral condition number $\kappa(\theta)$ is the same. In fact, in
practice when approximate eigenpairs are used, that are computed, for
example, by the Lanczos procedure, the choice $\theta = \lambda_{k+1}$
often gives  better performance than $\theta=\lambda_s$ in
this context because for  the former choice, the perturbed eigenvalues
are less likely to  be close to zero. An example  can be found in
Figure \ref{fig:preceig}, which shows  that when using accurate enough
eigenpairs,  both choices  of $\theta$  will give  the  same condition
number  (which  is also  the  optimal  one),  whereas when  relatively
inaccurate  eigenpairs are  used,  setting $\theta=\lambda_{k+1}$  can
give   a    better   condition    number   than   that obtained from   setting
$\theta=\lambda_{s}$.    In   what  follows,   we   assume  that   the
approximation   scheme  \eqref{eq:main}   is  used   with   $\theta  =
\lambda_{k+1}$, and we will  denote by $S_{k,\theta}\inv$ the related
approximate inverse of $S$.

\begin{figure}[h!]
\center
\caption{Illustration of the eigenvalues of $S\tS\inv$ for the case of
  a 2-D  Laplacian matrix with  $n_x=n_y=128$, the number  of subdomains
  $p=2$ and  the rank $k=16$,  such that the optimal  spectral condition number
  $\kappa(\theta)=3.0464$,    for    $\lambda_s    \le   \theta    \le
  \lambda_{k+1}$.  The  two  top   figures  show  the  eigenvalues  of
  $S\tS\inv$ with the  Ritz values and vectors from  $80$ steps of the
  Lanczos   iterations, where  for   both  choices   $\theta=\lambda_s$  and
  $\theta=\lambda_{k+1}$,  $\kappa(\theta)=3.0464$.   The  bottom  two
  figures show  the eigenvalues of $S\tS\inv$ in the cases  with $32$ steps  of the
  Lanczos    iterations,   where    $\kappa(\lambda_s)=7.8940$   while
  $\kappa(\lambda_{k+1})=6.6062$.    \label{fig:preceig}}  
  \subfigure[$\theta=\lambda_s$, $80$ Lanczos steps ]{
  \includegraphics[width=0.47\textwidth]{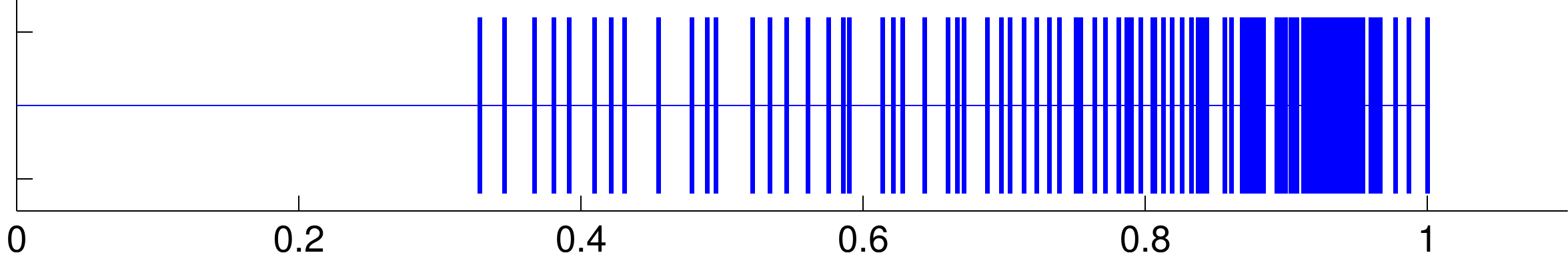}   }  
  \subfigure[$\theta=\lambda_{k+1}$, $80$ Lanczos steps]{
  \includegraphics[width=0.47\textwidth]{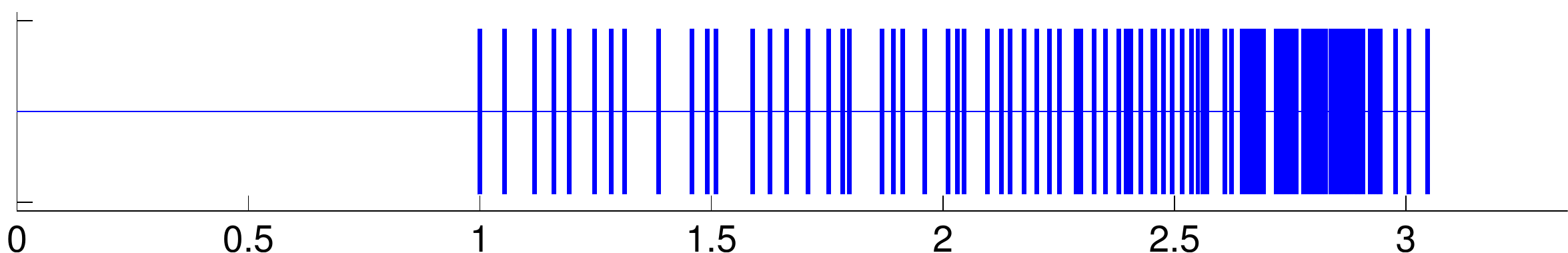}  }  
  \subfigure[$\theta=\lambda_s$, $32$ Lanczos steps]{
  \includegraphics[width=0.47\textwidth]{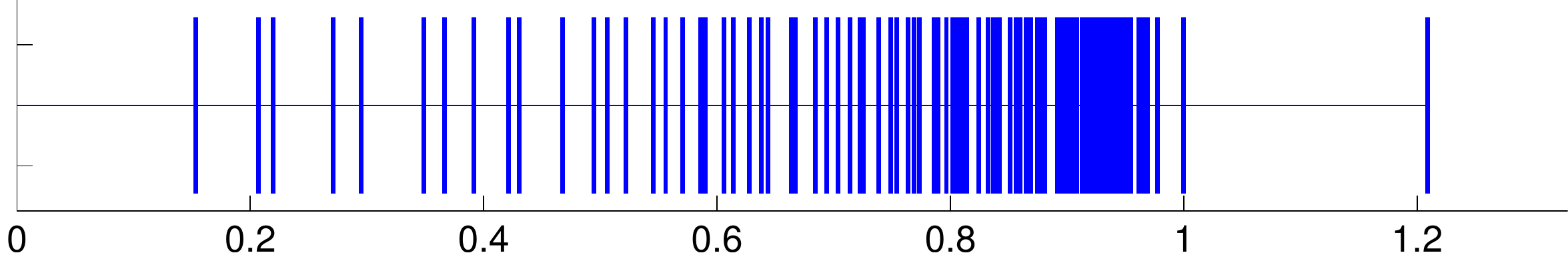}      }     
  \subfigure[$\theta=\lambda_{k+1}$, $32$ Lanczos steps]{
  \includegraphics[width=0.47\textwidth]{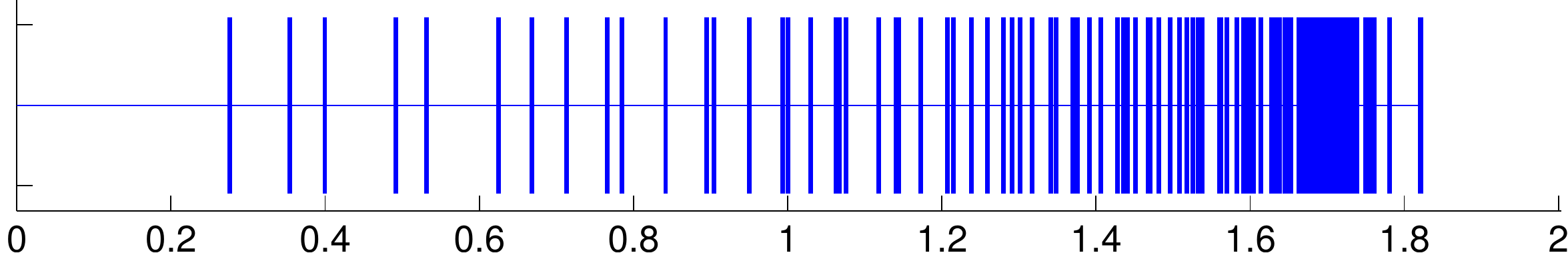} }
\end{figure}

From an implementation point of view, it is clear that only the $k$ largest eigenvalues and the associated eigenvectors as well as
the $(k+1)$-st largest eigenvalue of the matrix $C\inv E^TB\inv E$  are needed. We prove this result in the following proposition.

\begin{proposition}\label{prop:skt}
Let $Z_k$ be the eigenvectors of 
$C\inv E^TB\inv E$ associated with the $k$ largest eigenvalues, and let $\theta=\lambda_{k+1}$. The following
expression for $S_{k,\theta}\inv $ holds:
\begin{equation}\label{eq:skt}
S_{k,\theta}\inv
=
\frac{1}{1 - \theta} C\inv  \ + \
Z_k  \left[ (I- \Lambda_k)\inv - (1-\theta)\inv I \right] Z_k^T .
\end{equation}
\end{proposition}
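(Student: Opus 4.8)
The plan is to start from the explicit formula \nref{eq:tildeSinv2} for $\tS\inv$, specialize it to the choice of $\tilde\Lambda$ prescribed in \nref{eq:main} with $\theta=\lambda_{k+1}$, and then rewrite everything in terms of the eigenvectors of $C\inv E^TB\inv E$ rather than those of $H$. First I would record the relation between the two eigenbases. Since $C=LL^T$, we have $C\inv E^TB\inv E = L\invt\bigl(L\inv E^TB\inv E L\invt\bigr)L^T = L\invt H L^T$, so this matrix is similar to $H$ via $L\invt$; consequently its eigenvectors associated with $\lambda_1,\ldots,\lambda_k$ are exactly the columns of $Z_k := L\invt U_k$, where $U_k$ denotes the first $k$ columns of the unitary matrix $U$ of \nref{eq:H}. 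The two identities that will do all the work are $L\invt L\inv = C\inv$ and $L\invt U_k = Z_k$.

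Next I would partition $U=[\,U_k\ \ U_{k'}\,]$ and, correspondingly, write $(I-\tilde\Lambda)\inv-I$ as the block-diagonal matrix with blocks $(I-\Lambda_k)\inv-I$ and $\bigl[(1-\theta)\inv-1\bigr]I$. Substituting into \nref{eq:tildeSinv2} gives
\[
\tS\inv = C\inv + L\invt\Bigl( U_k\bigl[(I-\Lambda_k)\inv-I\bigr]U_k^T + \bigl[(1-\theta)\inv-1\bigr]\,U_{k'}U_{k'}^T\Bigr)L\inv .
\]
The key algebraic step is to invoke unitarity of $U$ in the form $U_{k'}U_{k'}^T = I - U_kU_k^T$, which eliminates all reference to the discarded eigenvectors. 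The middle factor becomes $U_k\bigl[(I-\Lambda_k)\inv-I\bigr]U_k^T + \bigl[(1-\theta)\inv-1\bigr]\bigl(I - U_kU_k^T\bigr)$, and conjugating by $L\invt(\cdot)L\inv$ and using $L\invt L\inv=C\inv$, $L\invt U_k=Z_k$ yields
\[
\tS\inv = C\inv + Z_k\bigl[(I-\Lambda_k)\inv-I\bigr]Z_k^T + \bigl[(1-\theta)\inv-1\bigr]\bigl(C\inv - Z_kZ_k^T\bigr).
\]
Finally I would collect like terms: the $C\inv$ contributions combine to $\frac{1}{1-\theta}C\inv$, and the bracket multiplying $Z_k(\cdot)Z_k^T$ simplifies, since $(I-\Lambda_k)\inv-I-\bigl[(1-\theta)\inv-1\bigr]I = (I-\Lambda_k)\inv-(1-\theta)\inv I$, giving precisely \nref{eq:skt}.

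The only point that requires a little care is the normalization convention for $Z_k$: the columns of $Z_k=L\invt U_k$ are eigenvectors of $C\inv E^TB\inv E$, but they are $C$-orthonormal rather than Euclidean-orthonormal, so one must not assume $Z_k^TZ_k=I$ anywhere. As long as $Z_k$ is understood to be exactly $L\invt U_k$ throughout, the identity $L\invt(I-U_kU_k^T)L\inv = C\inv - Z_kZ_k^T$ is immediate and no step of the computation uses orthonormality of $Z_k$ itself; every other step is a routine substitution or a collection of like terms.
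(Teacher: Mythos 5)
Your proof is correct and follows essentially the same route as the paper: partition $U=[U_k,\,W]$, substitute the block form of $(I-\tilde\Lambda)\inv - I$ into \nref{eq:tildeSinv2}, replace $WW^T$ by $I-U_kU_k^T$, and collect terms using $L\invt L\inv = C\inv$ and $Z_k = L\invt U_k$. Your added remarks, making explicit the similarity $C\inv E^TB\inv E = L\invt H L^T$ that identifies $Z_k$ with $L\invt U_k$, and noting that $Z_k$ is $C$-orthonormal rather than Euclidean-orthonormal, are correct clarifications of points the paper leaves implicit, but they do not change the argument.
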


\begin{proof}
We write $U = [U_k, W]$, where $U_k=\left[u_1,\ldots,u_k\right]$ contains the eigenvectors of $H$ associated with the largest $k$ eigenvalues and $W$ contains the
remaining columns $u_{k+1}, \cdots, u_s$. Note that $W$ is not available
but we use the fact that $W W^T = I-U_k U_k^T$ for the purpose of this
proof.
With this, \nref{eq:tildeSinv2} becomes:
\begin{align*}
S_{k,\theta}\inv &=
C\inv + L\invt[U_k, W] \xpmatrix{  (I- \Lambda_k)\inv -I  &  0 \\
0 & ((1-\theta)\inv -1) I } [U_k, W]^T L\inv   \\
&=
C\inv + Z_k \left[ (I- \Lambda_k)\inv -I \right] Z_k^T
+ \left[(1-\theta)\inv -1 \right] L\invt W W^T L\inv  \\
&=
C\inv + Z_k \left[ (I- \Lambda_k)\inv -I \right] Z_k^T
+ \left[(1-\theta)\inv -1 \right] L\invt(I- U_k U_k^T)L\inv  \\
&=
\frac{1}{1 - \theta} C\inv  + 
Z_k  \left[ (I- \Lambda_k)\inv - (1-\theta)\inv I \right] Z_k^T .
\end{align*}
\end{proof}

In a paper describing a similar technique, Grigori et al.
\cite{grigori:hal-01017448},  suggest 
another choice of  $\tilde{\Lambda}$ which is: 
\begin{equation} \label{eq:Lambda-2}
\tilde \lambda_i = \left\{ \begin{array}{cl}
1-(1-\lambda_i)/\varepsilon & \mbox{if} \quad  i\le k \\
 0  & \mbox{otherwise}
\end{array},
\right.
\end{equation}
where $\varepsilon$ is a parameter. Then the eigenvalues $\sigma_i$'s are
\[
\left\{ \begin{array}{cl}
\varepsilon   & \mbox{if} \quad  i\le k \\
1-\lambda_i  & \mbox{otherwise} 
\end{array},
\right.
\]
Note that the first choice in \eqref{eq:Lambda-1} is a special case of
\eqref{eq:Lambda-2} when $\varepsilon=1$.
Writing the transformed eigenvalues as
 \[ \{\eps, 1-\lambda_{k+1},1-\lambda_{k+2},\cdots, 1-\lambda_{s} \} , \]
the authors stated that 
the resulting condition number is $\kappa=(1-\lambda_s)/\eps$, with 
an implied assumption that $\eps \le 1-\lambda_{k+1}$. 
In the cases when $ 1-\lambda_{k+1} < \eps \le 1 -\lambda_s$, 
the spectral condition number is  the same as above,
i.e., equal to $(1-\lambda_s)/(1-\lambda_{k+1}) $. 
On the other hand, when 
$ 0 \le \eps \le 1-\lambda_{k+1}$, then the condition number is now 
$(1-\lambda_s)/\eps$, and the best value will be reached again for 
$\eps = 1-\lambda_{k+1}$, which  leads to the same condition number as above.

In all the cases, if we want to keep the spectral condition number of the matrix $S\tS\inv$, which is $\kappa=(1-\lambda_s)/(1-\lambda_{k+1})$, bounded from above by a constant $K$, 
we can only guarantee this by having $k$ large enough so that 
$ 1/(1-\lambda_{k+1}) \le K $, or equivalently, $\lambda_{k+1} \le 1 - 1/K$.
In other words, we would have to select the rank $k$ large enough such that 
\eq{eq:condk} 
\lambda_{k+1} \le 1 - \frac{1}{K} \ .
\en
Of course, the required  rank $k$  depends primarily on  the eigenvalue
decay of  the $\lambda_i$'s. In general, however, this means  that the 
method  will require  a sufficient number  of eigenvectors  to be
computed and  that this number must  be increased if  we wish to decrease
the spectral condition number to a given value. For problems 
arising from PDEs,
it is  expected that in order  to keep the spectral condition number constant,
$k$ must have to be increased as the problem sizes increase.

\section{Practical implementation} \label{sec:impl}
In this section, we will address the implementation details for building and applying an SLR preconditioner.

\subsection{Computation of the low-rank approximations}
One of the key issues in setting up the preconditioner \eqref{eq:M} is
to extract a low-rank approximation  to the matrix $C\inv E^TB\inv E$.
Assuming  that  $C$   is  SPD,  we  can  use   the  Lanczos  algorithm
\cite{GVL-Book,lanczos} on the  matrix $L\inv E^TB\inv E L\invt$,  where $L$ is
the  Cholesky factor  of $C$.  In  the case  when only  a few  extreme
eigenpairs   are  needed,  the   Lanczos  algorithm   can  efficiently
approximate  these without  forming  the matrix  explicitly since  the
procedure  only requires the  matrix for  performing the matrix-vector
products.
As is well-known, in the presence of rounding errors, 
orthogonality in the Lanczos procedure
is quickly lost and a form of reorthogonalization is needed in practice.
In our approach, the partial
reorthogonalization scheme \cite{parlett1979,simon1984} is used.
The cost of this step will not be an issue to the overall performance
when a small number of steps are performed to approximate a few
eigenpairs.
\subsection{The solves with $\mathrm{B}$ and $\mathrm{C}$}
 A   solve with  the matrix  $B$ amounts to $p$ local and independent solves with the matrices
$B_i$, $i=1,\cdots,p$.
These can be carried out efficiently either by a direct
solver or by Krylov subspace methods with  more traditional ILU
preconditioners for example.
On  the  other hand,  the  matrix $C$,  which  is  associated with  the
interface unknowns,  often has  some diagonal dominance  properties for
problems issued from discretized PDEs, so that an ILU-based method can
typically  work   well.   However,  for   large  indefinite  problems,
especially ones issued from 3-D PDEs,
the interface corresponds to a
large 2-D problem, and so 
a direct  factorization of $C$ will be expensive  
in terms of both the memory and the computational cost.
An  alternative is to  apply the  SLR
method recursively. This requires that the interface  points be 
ordered so  that $C$ will have the  same structure as the 
matrix $A$. That is
the leading block is block diagonal, a property satisfied by
the Hierarchical Interface Decomposition (HID)  method 
discussed in \cite{HenonSaad}.
This essentially yields a multilevel scheme of the SLR method,
 which is currently being investigated by the authors.
In the current SLR method, we simply use ILU factorizations for  $C$.

\subsection{Improving an SLR preconditioner} \label{sec:improv}
One of the main weaknesses of standard, e.g., ILU-type, preconditioners
is that they are difficult to update. For example, suppose we 
compute a preconditioner  to a given matrix  and find
that it is not accurate enough to yield convergence. In the 
case of ILUs we would have essentially to start from the beginning.
However, for SLR, improving a given preconditioner is essentially trivial.
For example, the heart of the SLR method consists of obtaining a low-rank
approximation the  matrix $H$ defined in \nref{eq:H}. 
Improving this approximation would consist in merely adding a few more
vectors (increasing $k$) and this can be easily achieved in a number of
ways, e.g., by resorting to a form of deflation, without having to throw away the vectors already computed.

\section{Numerical experiments} \label{sec:exp}

The experiments were conducted on a machine
at  Minnesota Supercomputing Institute,
equipped with two
Intel Xeon X5560 processors ($8$ MB Cache, $2.8$ GHz, quad-core)
and $24$ GB of main memory.
A preliminary implementation of the SLR preconditioner was
written in C/C++, and the code was compiled by the Intel C compiler using the
-O2 optimization level. BLAS and LAPACK routines from Intel Math Kernel Library
were used to enhance the performance on multiple cores.
The thread-level parallelism was realized by OpenMP \cite{openmp11}.

The accelerators used were the conjugate gradient (CG)
method for the SPD cases, and the generalized
minimal residual (GMRES) method with a restart dimension of $40$, denoted by
GMRES$(40)$ for the indefinite cases.
Three types of preconditioning methods were compared in our experiments:
 the incomplete Cholesky factorization with threshold dropping (ICT) or the
incomplete LDL factorization with threshold dropping (ILDLT),
the restricted additive Schwarz (RAS) method~\cite{RAS} (with one-level
overlapping), and the SLR method.
For the RAS method, we used ICT/ILDLT as the local solvers.
Moreover, since the RAS preconditioner is nonsymmetric even for a symmetric
matrix, GMRES$(40)$ was used with it.

For all the problems, we used the graph partitioner
\texttt{PartGraphRecursive} from \textsc{Metis} \cite{METIS-SIAM,Karypis199871}
to partition the domains.
The time for the graph partitioning will not be included in the time of building
the preconditioners.
For each subdomain $i$, matrix $B_{i}$ was reordered by the
approximate minimum degree  (AMD) ordering \cite{AmestoyDavisDuff96,
AmestoyDavisDuff04,Davis:2006:DMS:1196434} to reduce fill-ins
and then ICT/ILDLT was used as a local solver.
In the SLR method, the matrix $C$, which is assumed to be SPD, was factored by
ICT.
In the Lanczos algorithm, we set the maximum number
of Lanczos steps as five times the number of requested eigenvalues.

Based on the experimental results, we can state
that in general, building an SLR preconditioner, especially those using larger
ranks, requires much more time
than an ICT/ILDLT preconditioner or an RAS preconditioner that requires similar
storage. Nevertheless, experimental results indicated that the SLR
preconditioner is more robust and can achieve great time savings in the
iterative phase.
expensive but effective preconditioners may be justified because their cost is
amortized.
In this section, we first report on the results of solving symmetric linear
systems from a 2-D/3-D  PDE on regular meshes. Then, we will show the results
for solving a sequence of
general sparse symmetric linear systems.
For all the cases, the iterations were stopped whenever the residual norm had
been reduced by $8$ orders of magnitude or the maximum number of iterations
allowed,
which is $300$, was exceeded.
The results are summarized in Tables \ref{tab:spd2d3d}, \ref{tab:indef2d3d} and
\ref{tab:genmatresults}, where
all times are reported in seconds.
When comparing the preconditioners, the following factors are considered:
1) fill-ratio, i.e., the ratio of the number of nonzeros required to store
    a preconditioner to the number of nonzeros in the original matrix,
2) time for building preconditioners,
3) the number of iterations and
4) time for the iterations.
In all tables, `\texttt{F}' indicates non-convergence within the maximum allowed
number of steps.

\subsection{2-D  and 3-D model problems}
We  examine problems from a 2-D/3-D  PDE,
\begin{align} \label{eq:2d3d-pde}
-\Delta u-cu &= f\:\textrm{ in }\Omega, \notag \\
u &= \phi(x) \textrm{ on } \partial \Omega,
\end{align}
where $\Omega = \left(0,1\right)^2$ and $\Omega =
\left(0,1\right)^3$ are the  domains,
and $\partial \Omega$ is the boundary.
We take the $5$-point or $7$-point centered difference approximation on the
regular meshes.

To begin  with, we  examine the  required ranks of  the SLR  method in
order to bound the spectral  condition number of the matrix $S\tS\inv$
by a constant $K$. Recall from \eqref{eq:condk} that this requires that the
$(k+1)$-st largest  eigenvalue, $\lambda_{k+1}$, of  the matrix $C\inv
E^T B\inv E$ be less than $1-1/K$.  The results for 2-D/3-D Laplacians
are shown in Table \ref{tab:rank}. From  there we can see that for the
2-D problems, the required rank is about doubled when the step-size is
reduced  by half,  while  for the  3-D  cases, the  rank  needs to  be
increased by a factor of roughly $3.5$.

\begin{table}[ht]
\caption{The required ranks of the SLR method for bounding the condition number
of $S\tS\inv$ by $K$ for 2-D/3-D Laplacians. The number of subdomains used is
$8$ for the 2-D case and $32$ for the 3-D case. \label{tab:rank}}
\begin{center}
\begin{tabular}{c|c||c|c}
\hline
\multirow{2}{*}{Grid} & rank & \multirow{2}{*}{Grid} & rank\tabularnewline
 & $K\approx33$ &  & $K\approx12$\tabularnewline
\hline
$128^{2}$ & 3 & $25^{3}$ & 1\tabularnewline

$256{}^{2}$ & 8 & $40^{3}$ & 4\tabularnewline

$512{}^{2}$ & 20 & $64^{3}$ & 12\tabularnewline

$1024^{2}$ & 42 & $100^{3}$ & 42\tabularnewline
\hline
\end{tabular}
\end{center}
\end{table}

In  the next  set of  experiments, we  solve  \eqref{eq:2d3d-pde} with
$c=0$, so  that the coefficient  matrices are SPD  and we use  the SLR
preconditioner along  with the CG method.   Numerical experiments were
carried out to compare the  performance of the SLR preconditioner with
the ICT  and the RAS preconditioners.  The results are  shown in Table
\ref{tab:spd2d3d}. The sizes of the grids, the fill-ratios (fill), the
numbers of iterations (its), the time for building the preconditioners
(p-t) and  the time for iterations  (i-t) are tabulated.   For the SLR
preconditioners, the number  of subdomains (nd) and the  rank (rk) are
also  listed.   The  fill-ratios  of the  three  preconditioners  were
controlled to be roughly equal.  For  all the cases tested here and in
the following sections, the RAS method always used the same numbers of
subdomains as did the SLR method.  The ICT factorizations  were used
for the  solves with the matrices $B$  and $C$ in the  SLR method.  As
shown in Table~\ref{tab:spd2d3d}, we  tested the problems on three 2-D
grids  and three  3-D grids  of increasing  sizes, where  for  the RAS
method and the SLR method, the domain was partitioned into $32$, $64$
and $128$  subdomains respectively,  and the ranks  $16$ or  $32$ were
used in the SLR preconditioners.
\begin{table}[ht!]
  \caption{Comparison among the {\rm ICT}, the {\rm RAS} and the {\rm SLR}
preconditioners for solving  SPD linear systems from the 2-D/3-D PDE in
\eqref{eq:2d3d-pde} with $c=0$ along with the {\rm CG} and the {\rm GMRES}
method. \label{tab:spd2d3d}}
  \centering\tabcolsep1.5mm
  {
  \renewcommand{\arraystretch}{1.15}
  \begin{center}
  \begin{tabular}{r|rrrr|rrrr|rrrrrr}
    \hline 
    \multicolumn{1}{c|}{\multirow{2}{*}{Grid}} &
    \multicolumn{4}{c|}{ICT-CG} &
    \multicolumn{4}{c|}{RAS-GMRES} &
    \multicolumn{6}{c}{SLR-CG} \tabularnewline
    & fill & p-t & its & i-t
    & fill & p-t & its & i-t
    & nd & rk & fill & p-t & its & i-t \tabularnewline \hline
    $256^2$  & 4.5 & .074 & 51  & .239 & 4.5 & .088 & 129 & .281 & 32 & 16 & 4.3
& .090 & 67  & .145 \tabularnewline
    $512^2$  & 4.6 & .299 & 97  & 1.93 & 4.8 & .356 & 259 & 2.34 & 64 & 32 & 4.9
& .650 & 103 & 1.01 \tabularnewline
    $1024^2$ & 5.4 & 1.44 & 149 & 14.2 & 6.2 & 1.94 & \tF & 12.8 & 128 & 32 &
5.7 & 5.23 & 175 & 7.95 \tabularnewline
    \hline
    $40^3$   & 4.4 & .125 & 25  & .152 & 4.5 & .145 & 36 & .101 & 32 & 16 & 4.0
& .182 & 31  & .104 \tabularnewline
    $64^3$   & 6.8 & .976 & 32 & 1.24 & 6.2 & .912 & 49 & .622 & 64 & 32 & 6.3 &
1.52 & 38 & .633 \tabularnewline
    $100^3$  & 7.3 & 4.05 & 47 & 7.52 & 6.1 & 3.48 & 82 & 4.29 & 128 & 32 & 6.5
& 5.50 & 67 & 4.48 \tabularnewline
    \hline
  \end{tabular}
  \end{center}
  }
\end{table}

Compared  with the  ICT and  the  RAS preconditioners,  building an  SLR
preconditioner required  more CPU time (up  to $4$ times  more for the
largest  2-D case).  For  these problems,  the SLR-CG  method achieved
convergence  in  slightly more  iterations  than  those  with the  ICT
preconditioner, but  SLR  still achieved performance gains
in terms of significantly reduced iteration times.  The
CPU time for building an  SLR preconditioner is typically dominated by
the cost of the Lanczos algorithm.  Furthermore, this cost is actually
governed by  the cost of the  solves with  $B_i$'s and $C$,
which are  required at  each iteration.  Moreover,  when the  rank $k$
used  is  large, the  cost  of  reorthogonalization  will also  become
significant.  Some simple thread-level parallelism has been exploited
using OpenMP for the solves with the $B_i$'s,  which can be performed
independently. The  multi-threaded MKL routines also  helped speedup the
vector  operations in  the  reorthogonalizations.  We  point out  that
there is room for substantial improvements in the performance of these
computations. In particular  they are very suitable for  the SIMD type
parallel machines  such as  computers equipped with  GPUs or  with the
Intel  Xeon  Phi  processors.    These  features  have  not  yet  been
implemented in the current code.

Next, we consider solving the symmetric indefinite problems by setting $c>0$ in
\eqref{eq:2d3d-pde}, which corresponds to shifting the discretized negative
Laplacian (a positive definite matrix) by subtracting  $s I$ with a certain
$s>0$.
In this set of experiments, we solve the 2-D problems with $s=0.01$ and the 3-D
problems with $s=0.05$. The SLR method is compared to ILDLT and  RAS   with
GMRES$(40)$.
\begin{table}[ht]
\caption{Comparison among the {\rm ILDLT}, the {\rm RAS} and the {\rm
SLR} preconditioners for solving symmetric indefinite linear systems from the
2-D/3-D PDE in \nref{eq:2d3d-pde} with $c>0$ along with the {\rm
GMRES} method. \label{tab:indef2d3d}}
  \tabcolsep1.4mm
  \begin{center}
  {
  \renewcommand{\arraystretch}{1.15}
  \begin{tabular}{r|rrrr|rrrr|rrrrrr}
    \hline
    \multicolumn{1}{c|}{\multirow{2}{*}{Grid}} &
    \multicolumn{4}{c|}{ILDLT-GMRES} &
    \multicolumn{4}{c|}{RAS-GMRES} &
    \multicolumn{6}{c}{SLR-GMRES} \tabularnewline
    & fill & p-t & its & i-t & fill & p-t & its & i-t
    & nd & rk & fill & p-t & its & i-t \tabularnewline \hline
    $256^2$  & 8.2 & .174 & \tF &  --  & 6.3 & .134 & \tF &  --  & 8 & 32 & 6.4
& .213 & 33 & .125 \tabularnewline
    $512^2$  & 8.4 & .702 & \tF &  --  & 8.4 & .721 & \tF &  --  & 16 & 64 & 7.6
& 2.06 & 93 & 1.50 \tabularnewline
    $1024^2$ & 12.6 & 5.14 & \tF &  --  & 19.4& 21.6 & \tF &  --  & 8 & 128 &
10.8 & 24.5 & 50 & 4.81 \tabularnewline
    \hline
    $40^3$   & 6.9  & .249 & 54  & .540 & 6.7 & .254 & 99 & .300 & 64 & 32 & 6.7
& .490 & 23 & .123 \tabularnewline
    $64^3$   & 9.0  & 1.39 & \tF &  --  & 11.8& 2.16 & \tF &  --  & 128 & 64  &
9.1 & 3.94 & 45 & 1.16 \tabularnewline
    $100^3$  & 14.7 & 10.9 & \tF &  --  & 11.7& 14.5 & \tF &  --  &128 & 180 &
14.6 & 62.9 & 88 & 13.9 \tabularnewline
    \hline
  \end{tabular}}
  \end{center}
\end{table}

Results  are shown in  Table~\ref{tab:indef2d3d}.  For  most problems,
the  ILDLT/GMRES  and  the  RAS/GMRES  method failed  even  with  high
fill-ratios. In contrast, the SLR method appears to be more effective,
achieving  convergence for all cases, and great savings in the
iteration time. In contrast with  the SPD case,
a few  difficulties were
encountered.  For the 2-D  problems,  an SLR preconditioner with
a large  number of subdomains  (say, $64$ or  $128$)  often failed
to converge. As a result  the sizes of the subdomains were still quite
large and factoring the matrices $B_i$'s  was quite  expensive
in terms of both the CPU time and the memory requirement. Furthermore,
for both  the 2-D  and   3-D  problems, approximations of  higher ranks
were required compared to those  used in the SPD cases. This only
increased the memory requirement slightly,  but it significantly increased the
CPU
time required by the Lanczos  algorithm. An example is the largest 3-D
problem in Table~\ref{tab:indef2d3d}, where a rank of $180$ was used.

\subsection{General matrices}
We selected $15$ matrices from the University of Florida sparse matrix
collection \cite{Davis:UFM} for the following tests.
Among these $10$ matrices are SPD matrices and
$5$ matrices are symmetric indefinite.
Table \ref{tab:matrices} lists the name, the order (N),
the number of nonzeros (NNZ), the positive definiteness,
and a short description for each matrix.
 If the actual right-hand side is not provided,
the linear system was  obtained  by creating an
 artificial one as $b=Ae$, where $e$ is a random vector of unit $2$-norm.

\begin{table}[ht]
\caption{Names, orders (N), numbers of nonzeros (NNZ)
and positive definiteness  of the test matrices.}
\begin{center}\tabcolsep1.7mm
{
\renewcommand{\arraystretch}{1.05}
\begin{tabular}{lrrrl}
\hline\noalign{\smallskip}
\multicolumn{1}{c}{MATRIX} & \multicolumn{1}{c}{N} & \multicolumn{1}{c}{NNZ} &
\multicolumn{1}{c}{SPD} & \multicolumn{1}{c}{DESCRIPTION}\tabularnewline
\noalign{\smallskip}\hline\noalign{\smallskip}
Williams/cant    & 62,451    & 4,007,383 & yes & FEM cantilever  \tabularnewline
UTEP/dubcova2    & 65,025    & 1,030,225 & yes & 2-D/3-D PDE problem
\tabularnewline
UTEP/dubcova3    & 146,689    & 3,636,643 & yes & 2-D/3-D PDE problem
\tabularnewline
Rothberg/cfd1    & 70,656    & 1,825,580 & yes & CFD problem \tabularnewline
Rothberg/cfd2    & 123,440   & 3,085,406 & yes & CFD problem \tabularnewline
Schmid/thermal1  & 82,654    & 574,458   & yes & thermal problem \tabularnewline
Schmid/thermal2  & 1,228,045 & 8,580,313 & yes & thermal problem \tabularnewline
Wissgott/parabolic\_fem & 525,825 & 3,674,625 & yes & CFD problem
\tabularnewline
CEMW/tmt\_sym & 726,713 & 5,080,961 & yes & electromagnetics problem
\tabularnewline
McRae/ecology2 & 999,999 & 4,995,991 & yes & landscape ecology problem
\tabularnewline
\hline
Lin/Lin & 256,000 & 1,766,400 & no & structural problem  \tabularnewline
Cote/vibrobox    & 12,328    & 301,700   & no  & vibroacoustic problem
\tabularnewline
Cunningham/qa8fk & 66,127    & 1,660,579 & no  & 3-D acoustics problem
\tabularnewline
Koutsovasilis/F2 & 71,505    & 5,294,285 & no  & structural problem
\tabularnewline
GHS\_indef/helm2d03 & 392,257 & 2,741,935 & no & 2-D Helmholtz problem
\tabularnewline
\hline
\end{tabular}
}
\end{center}
\label{tab:matrices}
\end{table}

Table~\ref{tab:genmatresults} shows the performance of the three
preconditioning methods.
The CG method and the GMRES method with the SLR
preconditioner achieved convergence for all the cases, whereas for many cases,
they failed to converge with the ICT/ILDLT and the RAS preconditioners.
Similar to the experiments for the model problems, the SLR
preconditioner often required more CPU time to build than
the other two counterparts but it required fewer iterations for most of the
cases and achieved significant CPU time savings in the iteration phase for
almost all the cases (the exception is {\tt qa8fk}, for which the RAS method
gave the best iteration time).

\begin{table}[ht]
\caption{
  Comparison among the {\rm ICT} or the {\rm ILDLT}, the {\rm RAS} and the {\rm
SLR}
  preconditioners for solving general symmetric linear systems
  along with the {\rm CG} or {\rm GMRES$(40)$} method.
\label{tab:genmatresults}}
  \tabcolsep1.325mm
  \begin{center}
  {
  \renewcommand{\arraystretch}{1.1}
\begin{tabular}{l|rrrr|rrrr|rrrrrr}
    \hline 
    \multicolumn{1}{c|}{\multirow{2}{*}{MATRIX}} &
    \multicolumn{4}{c|}{ICT/ILDLT} &
    \multicolumn{4}{c|}{RAS} &
    \multicolumn{6}{c}{SLR} \tabularnewline
    & fill & p-t & its & i-t
    & fill & p-t & its & i-t
    & nd & rk & fill & p-t & its & i-t \tabularnewline \hline
cant & 4.7  &  3.87  & 150  & 9.34  & 5.9  & 6.25  & \tF  &  --  & 32 & 90 & 4.9
  & 5.58  & 82  & 1.92 \tabularnewline
dubcova2 & 2.7  &  .300  & 47   & .492   & 2.8   & .489  & 60    & .223  & 16 &
32 & 2.8   & .280  & 19    & .080 \tabularnewline
dubcova3 & 2.2  &  1.01  & 46   & 1.44   & 2.1   & 1.46  & 59    & .654  & 16 &
32 & 1.8   & .677  & 19    & .212 \tabularnewline
cfd1 & 6.9  &  2.89  & 295  & 11.9  & 8.3   & 3.04  & \tF   &  --    & 32 & 32 &
6.9   & 2.13  & 64    & 1.07 \tabularnewline
cfd2 & 9.9  & 13.5  & \tF  &  --   & 8.9   & 7.88  & \tF   &  --     & 32 & 80 &
8.8   & 7.62  & 178   & 5.75 \tabularnewline
thermal1 & 5.1  &  .227  & 68   & .711   & 5.0   & .348  & \tF   &  --   & 16 &
32 & 5.0   & .277  & 59    & .231 \tabularnewline
thermal2 & 6.9  &  5.10  & 178  & 39.3  & 7.1   & 8.46  & \tF   &  --   & 64 &
90 & 6.6   & 14.8 & 184   & 15.0 \tabularnewline
para\_fem & 6.1  &  2.04  & 58   & 4.68   & 6.3   & 3.17  & 236   & 6.11 & 32 &
80 & 6.9   & 6.05  & 86    & 3.03 \tabularnewline
tmt\_sym & 6.0  &  1.85  & 122  & 11.6  & 6.2   & 3.67  & \tF   &  --   & 64 &
80 & 5.9   & 6.61  & 127   & 5.23 \tabularnewline
ecology2 & 8.4  &  2.64  & 142  & 18.5  & 9.5   & 4.78  & \tF   &  --   & 32 &
96 & 8.0   & 12.3 & 90    & 5.58 \tabularnewline
\hline
Lin & 11  &  1.93  & \tF  &  --   & 19  & 4.61  & \tF   &  --  & 64  & 64 & 9.9
 & 3.78  & 73    & 1.75 \tabularnewline
vibrobox & 6.0  &  .738  & \tF  &  --    & 7.0   & .513  & \tF   &  --   & 4 &
64 & 3.8   & .437  & 226   & .619 \tabularnewline
qa8fk & 4.2 & .789 &  22  &  .507 & 4.6 & 1.14 &  35  & .273 & 16 & 64 & 4.5 &
1.94 & 28 & .309 \tabularnewline
F2 & 5.1 & 9.66 &  \tF &   --  & 5.4 & 9.43 &  \tF &  --  & 8 & 80 & 3.9 & 6.25
& 72 & 2.14 \tabularnewline
helm2d03 & 14 & 14.4 &  \tF &   --  & 11 & 7.20 &  \tF &  --  & 16 & 128 & 11 &
11.9 & 63 & 2.63 \tabularnewline
\hline
\end{tabular}
}
  \end{center}
\end{table}

\section{Conclusion} \label{sec:concl}
This paper presented a  preconditioning method, named SLR, 
based  on a Schur  complement
approach with  low-rank corrections for solving
symmetric  sparse  linear  systems. 
Like the method in \cite{MLR-1}, the new  method 
uses  a low-rank approximation
to build a preconditioner, exploiting some decay property of 
eigenvalues. The major difference with 
 \cite{MLR-1} is that SLR is not recursive. It focuses on the
Schur complement in any standard domain decomposition framework
and tries to approximate its inverse by exploiting low-rank
approximations. As a result, the method is much easier
to implement.

Experimental  results indicate
that in terms of iteration times, the  proposed preconditioner can  be a more
efficient alternative to the  ones based on incomplete factorizations,
namely,  the  ILU-type or  block  ILU-type  methods 
 for     SPD systems.  Moreover,  this  preconditioner appears  to be  more
robust than the incomplete  factorization based methods for indefinite
problems.  Recall  that ILU-based methods often  deliver unstable, and
in  some  cases  quite  dense  factors when  the  original  matrix  is
highly indefinite, and this  renders  them  ineffective  for  such  cases.  
In contrast  SLR is essentially a form of approximate inverse technique
and as such it is not prone to these difficulties.
On  the negative  side, building  an  SLR preconditioner  can be  time
consuming, although  several mitigating  factors should be  taken into
account. These are similar to those pointed out in \cite{MLR-1} which 
also exploits low-rank approximation and we summarize them here.
The first is  that a big part of the computations to build the 
SLR  preconditioner  can be  easily  vectorized and this is  especially
attractive for massively  parallel machines,  
such as those equipped with GPUs  or 
with the Intel Xeon Phi processors.  The set-up phase is likely to be
far more advantageous than a factorization-based one which tends to be
much more  sequential, see, e.g., \cite{RliSaadGPU}.   
The second is that 
there are situations in which many systems with the same matrix
must  be  solved in which  case  more  expensive but  more  effective
preconditioners may be justified as their cost will  be amortized.
Finally, these preconditioners 
are more easily updatable than traditional ILU-type preconditioners, see
Section~\ref{sec:improv} for a discussion.

\section*{Appendix} 
Let
\[
T = 
\begin{pmatrix} 
 2a &   -1    &           &                   \cr
-1  &    2a   &  -1       &                   \cr
    & \ddots  &  \ddots   & \ddots            \cr
    &         &  \ddots   & \ddots  & -1      \cr
    &         &           &  -1     &   2a         
\end{pmatrix}
\]
and
\[
T=
\begin{pmatrix} 
  \hspace{10pt}  1    &         &     &                   \cr
-d_1 \inv     &   1     &           &                   \cr
              & -d_2\inv   &  \ddots   &                   \cr
    &         &  \ddots   & \ddots  &         \cr
    &         &           &  - d_{n}\inv     &  1           
\end{pmatrix} 
\begin{pmatrix} 
 d_1  &  - 1       &           &                   \cr
       &   d_2     &  -1       &                   \cr
    &         &  \ddots   & \ddots            \cr
    &         &           & \ddots  &  -1     \cr
    &         &           &     &   d_n     
\end{pmatrix} 
\]
be the LU factorization of $T$.
We are interested in $d_n\inv $. If we solve $T x = e_n $ where $e_n$ is the $n$th canonical basis vector for $\RR^{n}$, and
$x = [\xi_0, \cdots, \xi_{n-1}]^T$, then clearly $\xi_{n -1} =  1/d_n$
which is what we need to calculate. 
Let $\xi_k = U_k(a) $, for $k=0,1,\cdots,n-1$, where $U_k$ is the $k$-th degree Chebyshev 
polynomial of the second kind. These polynomials satisfy the 
recurrence relation: $U_{k+1} (t) = 2 t U_k (t) - U_{k-1}(t) $,
starting with $U_0 (t) = 1$ and $U_1(t) = 2 t$.
Then clearly, equations  $k=1,\cdots, n-1$ of the system $Tx = e_n$ 
 are satisfied. For the last equation we get $U_n(a)$ instead of 
the wanted value of 1.  Scaling
$x$ by $U_n(a) $ yields the result $1/d_n  = \xi_{n-1} =  U_{n-1}(a)/U_{n} (a) $.

\section*{Acknowledgements}
The authors  are grateful to 
the University of Minnesota
Supercomputing Institute
for providing them with computational resources and
assistance with the computations.

\bibliographystyle{siam}
\bibliography{strings,local}
\end{document}